\newcommand{\appropto}{\mathrel{\vcenter{
  \offinterlineskip\halign{\hfil$##$\cr
    \propto\cr\noalign{\kern2pt}\sim\cr\noalign{\kern-2pt}}}}}
\DeclareRobustCommand{\okina}{%
  \raisebox{\dimexpr\fontcharht\font`A-\height}{%
    \scalebox{0.8}{`}%
  }%
}
\crefname{hypothesis}{Hypothesis}{Hypotheses}
\crefname{fact}{Fact}{Facts}
\title{A geometric ensemble method for Bayesian inference\thanks{Submitted to the editors DATE.
\funding{This work was supported by startup funding from the University of Hawai\okina{}i at M\=anoa }}}
\author{Andrey A. Popov\thanks{Department of Information and Computer Sciences, University of Hawai\okina{}i at M\=anoa, Honolulu, HI 
  (\email{apopov@hawaii.edu}, \url{https://www2.hawaii.edu/\string~apopov/}).}}
\DeclareMathOperator{\Vol}{Vol}
\newcommand{\mH}[0]{$\mathcal{H}$}
\pgfplotsset{compat=1.18} 
\newcommand{\U}[0]{\mathcal{U}}
\begin{document}

\pgfplotsset{clean/.style={axis lines*=left,
        axis on top=true,
        axis x line shift=0.0em,
        axis y line shift=0.75em,
        every tick/.style={black, thick},
        axis line style = ultra thick,
        tick align=outside,
        clip=false,
        major tick length=4pt}}

\maketitle

\begin{abstract}
Conventional approximations to Bayesian inference rely on either approximations by statistics such as mean and covariance or by point particles.
Recent advances such as the ensemble Gaussian mixture filter have generalized these notions to sums of parameterized distributions.
This work presents a new methodology for approximating Bayesian inference by sums of uniform distributions on convex polytopes.
The methodology presented herein is developed from the simplest convex polytope filter that takes advantage of uniform prior and measurement uncertainty, to an operationally viable ensemble filter with Kalmanized approximations to updating convex polytopes.
Numerical results on the Ikeda map show the viability of this methodology in the low-dimensional setting, and numerical results on the Lorenz '96 equations similarly show viability in the high-dimensional setting.
\end{abstract}

\begin{keywords}
Bayesian inference, data assimilation, nonlinear estimation, convex polytopes, kernel density estimation
\end{keywords}

\begin{MSCcodes}
60G25, 62L12, 62M20, 93E11
\end{MSCcodes}

\section{Introduction}

Inference that does not incorporate all known, relevant, information is not Bayesian~\cite{jaynes2003probability}.
Limited computational resources prevent us from performing `true' Bayesian inference, and thus approximation that are non-Bayesian have to be made for tractability.
If the approximations have a notion of asymptotic convergence, then they can be said to converge to Bayesian inference. 
This work is concerned with convergent ensemble methods for Bayesian inference for data assimilation~\cite{reich2015probabilistic,asch2016data} and state estimation~\cite{bar2004estimation} applications. 
Specifically Monte Carlo-esque methods that in the limit of the number of samples converge in distribution~\cite{reich2015probabilistic} to `true' Bayesian inference.
Examples of convergent methods are the bootstrap particle filter (BPF)~\cite{stewart1992use,reich2015probabilistic}, the ensemble transform particle filter (ETPF)~\cite{reich2013nonparametric, acevedo2017second}, the ensemble Gaussian mixture filter (EnGMF)~\cite{anderson1999monte,liu2016efficient, yun2022kernel,popov2024adaptive}, and the point mass filter (PMF)~\cite{bergman1999terrain, giraldo2025gaussian}.
While this work focuses on data assimilation and state estimation, the concepts and methods developed herein are applicable to a wide range of Bayesian inference problems.
Some of the ideas presented in this work were indirectly inspired by the ideas presented in~\cite{nielen2024polytope}.

Recent advances~\cite{yun2022kernel,popov2024adaptive,popov2024Epanechnikov} to ensemble mixture model filters such as the EnGMF~\cite{anderson1999monte} have shown that convergent filters can be both computationally tractable and cheap for higher dimensional problems than traditional Monte Carlo methods.
Algorithms such as the ETPF have also shown that geometric constraint, such that posterior samples lie inside of the convex hull of the prior samples~\cite{van2015assimilating}, can be convergent in the correct circumstances, but require expensive, large matrix-valued linear programming solution and that do not scale to the higher-dimensional setting. 
Furthermore ideas of identifying probabilities in terms of bounded regions have been explored for the $H_\infty$ filter in~\cite{zasadzinski2003robust,chang2015new,liu2016polytopic}, and, most closely related to this work, for tracking applications~\cite{broman1986polytopes}.

Building off of these ideas, this work is concerned with building a new type of method for convergent Bayesian inference from an alternate geometric foundation based on convex polytopes~\cite{ziegler2012lectures}.
This paper is organized as follows: We first present existing background on polytopes in~\cref{sec:background}. 
Next, we present several new ideas including the convex polytope filter (CPF) that is the optimal filter for uniform distributions on convex polytopes, and `extended' approximations thereto in the form of the extended convec polytope filter (ECPF) for non-linear measurements  in section~\cref{sec:convex-polytope-filter}.
We then `Kalmanize' these filters to generalize them to non-uniform measurement distributions with the Kalmanized convex polytope filter (KCPF) and extended Kalmanized convex polytope filter (EKCPF) in~\cref{sec:Kalmanized-convex-polytope-filter}.
Subsequently, we present naive convergent ensemble-based filters, the bootstrap convex polytope filter (BCPF), the ensemble convex polytope filter (EnCPF) and the ensemble Kalmanize convex polytope filter (EnKCPF) in~\cref{sec:ensemble-polytope-filters}.
Penultimately, we provide sequential filtering numerical experiments on a low-dimensional problem (Ikeda map) and on a high-dimensional problem (Lorenz '96) in~\cref{sec:numerical-experiments}.
We conclude with some final remarks and a future outlook in~\cref{sec:conclusions}.

\section{Background}
\label{sec:background}

There are two different ways of expressing a convex polytope, through the convex hull of its vertices or through its bounding hyperplanes~\cite{ziegler2012lectures, henk2017basic}.
While these definitions are known to be equivalent, their computational properties are significantly different.
Take the $n$-dimensional unit cube and observe that it is bounded by $2n$ hyperplanes, which we later show are expressible using $\mathcal{O}(n^2)$ parameters, but it has $2^n$ vertices, which are expressible using $\mathcal{O}(n 2^n)$ parameters. 
Note that the opposite is true for an $n$-dimensional unit cross-polytope, as it is the dual of the unit cube. 
The vertex representation of a convex polytope is known as a $\mathcal{V}$-polytope and a (non-unique) bounding hyperplane representation is known as an \mH-polytope.
For ease of adding hyperplane constraints to a polytope, we opt to exclusively make use of the \mH-polytopes in this work.

We now formally define \mH-polyhedra and \mH-polytopes.
An \textit{\mH-polyhedron} is a subset of $\mathbb{R}^n$, where for the rest of this work $n$ is the dimension of interest, such that,
\begin{equation}
    Ph(A, b) = \{ x | A x \leq b,\,\, x\in\mathbb{R}^n\},
\end{equation}
where the matrix $A\in\mathbb{R}^{p\times n }$ and column vector $b\in\mathbb{R}^{p}$ define a collection of $p$ bounding hyperplanes.
If the set defined by $Ph(A,b)$ is bounded, then the convex polyhedron is an \textit{\mH-polytope},
\begin{equation}\label{eq:polytope}
    P(A, b) = \{ x | A x \leq b,\,\, x\in\mathbb{R}^n\},
\end{equation}
where the $p$ bounding hyperplanes are not necessarily all `active', meaning that there are infinitely many $A$ and $b$ that define the same set, and, that there is a lower bound for $p$, but there is no upper bound. 

For the rest of this work we will use the following shorthand,
\begin{equation}
    Ph_{\Box} \coloneqq Ph(A_\Box, b_\Box),\,\,\, P_{\Box} \coloneqq P(A_\Box, b_\Box),
\end{equation}
for expressing the hyperplanes defining the \mH-polyhedron $Ph_{\Box}$ and \mH-polytope $P_{\Box}$.

We now describe several useful properties of \mH-polytopes. Given two \mH-polytopes $P_1$ and $P_2$ then,
\begin{equation}
    P_1 \cap P_2 = \left\{x \,\,\,\middle| \,\,\,\begin{bmatrix}
        A_1\\A_2
    \end{bmatrix} x \leq \begin{bmatrix}
        b_1\\b_2
    \end{bmatrix}\right\},
\end{equation}
is also an \mH-polytope.
If $Ph_2$ is an \mH-polyhedron, then
\begin{equation}
    P_1 \cap Ph_2 = \left\{x \,\,\,\middle| \,\,\,\begin{bmatrix}
        A_1\\A_2
    \end{bmatrix} x \leq \begin{bmatrix}
        b_1\\b_2
    \end{bmatrix}\right\},
\end{equation}
is also an \mH-polytope.
If $C$ is an invertible matrix, and $d$ is some vector, then the affine transformation,
\begin{equation}
    C P_1 + d = P\left(A_1 C^{-1}, b_1 + A_1 C^{-1} d\right),
\end{equation}
defines an \mH-polytope.
It is important to note that a non-invertible transformation would define an \mH-polyhedron.
Finally, the Minkowski sum,
\begin{equation}\label{eq:Minkowski-sum}
    P_1 \oplus P_2 = \left\{x + y \,\,\,\middle| \,\,\, x\in P_1, y \in P_2\right\},
\end{equation}
defines an \mH-polytope.

\begin{remark}[\mH-polytope representation of the Minkowski sum]
    While it is true that \mH-polytopes are closed under Minkowski sum, computing the hyperplanes bounding the resulting polytope is a non-trivial affair, and is in practice intractable for higher dimensions~\cite{tiwary2007hardness}.
\end{remark}

We now define the uniform probability density function on a polytope $P$ as,
\begin{equation}
    \U(x\,;\, P) = \begin{cases}
        \frac{1}{\Vol(P)} & x \in P,\\
        0 & \text{sonst}
    \end{cases},\quad
    \Vol(P) = \int_P \mathrm{d}{x},
\end{equation}
where $\Vol(P)$ is the volume of the \mH-polytope $P$.

The Chebyshev center of an \mH-polytope $P$ is the center of the largest sphere that can be fully enclosed by said polytope.
The center $x_c$ and radius $r_c$ of this sphere are given by the solution to the following linear programming problem, 
\begin{equation}\label{eq:Chebyshev-center}
\begin{gathered}
    x_c,\,\, r_c = \operatorname{arg\,min}_{x, r} -r\\
    \text{s.t.}\quad \left(A x + \begin{bmatrix}\lVert a_1\rVert & \cdots & \lVert a_p\rVert\end{bmatrix}^T \right)r \leq b,
\end{gathered}
\end{equation}
where $\left\lVert a_i\right\rVert$ is the norm of the $i$th row of $A$.

The unit cube in $n$ dimensions is defined as,
\begin{equation}\label{eq:unit-cube}
    Q_n = P\left(
    \begin{bmatrix}
        I_n \\
        -I_n \\
    \end{bmatrix},
    \begin{bmatrix}
        0_n \\
        1_n \\
    \end{bmatrix}
    \right)
\end{equation}
where $I_n$ is the $n\times n$ identity matrix, $1_n$ is the $n$-vector of ones, and $0_n$ is the $n$-vector of zeros.

It is possible to modify a unit cube~\cref{eq:unit-cube} on which the uniform distribution would have a desired mean $\mu$ and covariance $\Sigma$ as follows:
\begin{equation}\label{eq:covariance-cube}
    Q_{\mu, \Sigma} = \sqrt{12} \Sigma^{1/2} \left(I_n Q_n - \frac{1}{2}1_n\right) + \mu,\quad \mathbb{E}[\U(Q_{\mu, \Sigma})] = \mu, \quad \mathbb{V}[\U(Q_{\mu, \Sigma})] = \Sigma,
\end{equation}
where $\sqrt{12}$ is a constant inherent to the uniform distribution, $\mathbb{E}$ is the expected value operator, and $\mathbb{V}$ is the covariance operator. The matrix square root that is used in this work is, 
\begin{equation}\label{eq:matrix-sqrt}
    \Sigma^{1/2} = V\sqrt{\Lambda}, \quad \Sigma = V\Lambda V^T,
\end{equation}
where the latter is the eigendecomposition of the covariance matrix into $V$ eigenvectors and a diagonal matrix $\Lambda$ of eigenvalues.

\begin{remark}
    Note that the storage cost of $Q_n$ in~\cref{eq:unit-cube} is $\mathcal{O}(n)$ as the identity matrices are sparse. 
    If a sparse decomposition to the square root of the covariance in~\cref{eq:covariance-cube} that also takes $\mathcal{O}(n)$ storage is used, then it is possible for $Q_{\mu, \Sigma}$ to only take $\mathcal{O}(n)$ storage instead of $\mathcal{O}(n^2)$ storage.
\end{remark}

Given a point $c$ on a hyperplane and a vector $v$ that is normal to the hyperplane,
\begin{equation}\label{eq:centroid-normal-hyperplane}
    a = v^T,\,\,\, b = a c,
\end{equation}
define a row vector $a$ and scalar $b$ that define the hyperplane such that all points $x$ for which $a x \leq b$ are opposite the direction of the normal vector.

We now turn our attention to a brief introduction to Bayesian inference. 
Assume that we have prior information about some quantity of interest in the form of the random variable $X^-$, whose probability distribution describes our uncertainty thereof (in  an alternate sense this would be called the hypothesis).
Additionally, assume that we have some other prior information $T$ whose probability distribution describes anything we know related to the quantity of interest that is external to its representation (in  an alternate sense this would be called the prior information).
We receive some additional information, related to the quantity of interest in the form of the random variable $Y$ whose probability distribution describes our uncertainty about it, such as about its veracity, reliability, or plausibility (in an alternate sense this would termed the data).
Bayesian inference combines these three sources of information as follows,
\begin{equation}\label{eq:Bayesian-inference}
    \mathbb{P}(X^+ | T) = \mathbb{P}(X^- | Y, T) = \frac{\mathbb{P}(Y | X^-, T)\mathbb{P}(X^- | T)}{\mathbb{P}(Y | T)},
\end{equation}
where $X^+$ is a random variable of our posterior information~\cite{jaynes2003probability}.
The goal of all the methods presented herein is to perform (approximations to) the inference in~\cref{eq:Bayesian-inference}.

\section{Convex Polytope Filter}
\label{sec:convex-polytope-filter}

We begin by reformulating classical linear filtering~\cite{jazwinski2007stochastic} ideas in terms of uniform distributions on \mH-polytopes.
Assume that we want to estimate the state $x_k$ at time index $k$ of some unspecified process. 
We have access to a measurement of $x_k$,
\begin{equation}
    y_k = H_k x_k + \eta_k
\end{equation}
where $H_k$ is some linear measurement operator, $\eta_k$ is a realization of noise fundamental to our understanding of the measurement uncertainty.
The random variable $Y_k$ represents our uncertainty about the measurement.
In the case where the measurement noise $\eta_k$ is represented by a uniform distribution on some \mH-polytope,
\begin{equation}\label{eq:measurement-noise}
    \eta_k \sim \U(P_{\eta_k}),
\end{equation}
the distribution that describes our measurement uncertainty about $y_k$ is simply,
\begin{equation}
    Y_k \sim \U(P_{Y_k}),
\end{equation}
where the underlying \mH-polytope, 
\begin{equation}
    P_{Y_k} = P_{\eta_k} + H_k x_k,
\end{equation}
is simply an affine transformation of the \mH-polytope of the measurement noise in~\cref{eq:measurement-noise}.

Given prior uncertainty about the state that is described by a uniform distribution on an \mH-polytope,
\begin{equation}\label{eq:CPF-prior-uncertainty}
    X^-_k \sim \U(P_{X^-_k}),
\end{equation}
where as in~\cref{eq:Bayesian-inference} the superscript $\Box^-$ represents prior information, with the superscript $\Box^+$ later representing posterior information.

Note that $P_{Y_k}$ is an \mH-polytope in the measurement space, and not in the space of the full system.
It is possible to transform the polytope from measurement to the full space to obtain the measurement \mH-polyhedra,
\begin{equation}\label{eq:CPF-Ph-Yk}
    Ph_{Y_k} = H_k^\dagger P_{Y_k} = H_k^\dagger P(A_{Y_k}, b_{Y_k}) =  Ph(A_{Y_k} H_k, b_{Y_k}),
\end{equation}
where $H_k^\dagger$ is some pseudo-inverse of $H_k$ that does not need to be explicitly computed.
As \mH-polytopes are closed under intersection with \mH-polyhedra, the posterior distribution can simply be written as,
\begin{equation}\label{eq:CPF}
\begin{gathered}
    p_{X^+_k}(x) = \U(x\,;\,P_{X^+_k})\\
    P_{X^+_k} = P_{X^-_k} \cap {Ph}_{Y_k},
\end{gathered}
\end{equation}
thus defining the action of the convex polytope filter (CPF). 
A similar formulation appears in the literature in~\cite{broman1986polytopes}, though the formulation presented in this work was independently derived.

\begin{remark}[Propagation in the CPF]
Linear propagation of the random variable
    \begin{equation}
        X_k^- = M_k X^+_{k-1} + \Xi_k,\quad p_{\Xi_k}(x) = \U(x\,;\,P_{\Xi_k}),
    \end{equation}
    where linear model $M_k$ has model error about which our uncertainty is described by the distribution of the random variable $\Xi_k$, then the propagated distribution is described by,
    \begin{equation}
        p_{X_k^-}(x) = \U(x\,;\,P_{X_k^-}), \quad P_{X_k^-} = M_k P_{X^+_{k-1}} \oplus P_{\Xi_k}
    \end{equation}
    where the posterior \mH-polytope is a linear transformation coupled together with a Minkowski sum~\cref{eq:Minkowski-sum} on the model error \mH-polytope.
    As this is highly impractical to compute in the general high-dimensional case, this formulation of propagation is largely of independent interest.
\end{remark}

\subsection{Extended Convex Polytope Filter}
We now turn our attention to non-linear measurement operators,
\begin{equation}\label{eq:non-linear-measurement}
    y_k = h(x_k) + \eta_k,
\end{equation}
where the measurement operator $h$.
For most interesting $h$, the true posterior distribution is still uniform (provided that $\eta_k$ is uniform), but is not defined on an \mH-polytope (see the example in the next section).
Our goal is to linearize $h$ in such a way as the posterior is an \mH-polytope, which we now provide.

\begin{lemma}[Measurement inverse approximation]\label{eq:measurement-inverse-approx}
    Assume that a right-inverse of $h_k(x)$ exists. 
    A first order affine approximation thereof is given by,
    \begin{equation}
        h_k^\dagger(z) \approx \mu^-_k - H_k^\dagger(\mu^-_k)\,h_k(\mu^-_k) + H_k^\dagger(\mu^-_k)\,z,
    \end{equation}
    where $\mu^-_k$ is the mean of the prior, and,
    \begin{equation}\label{eq:measurement-Jacobian}
        H_k(\mu^-_k) = \left.\frac{\mathrm{d} h}{\mathrm{d} x}\right|_{x = \mu^-_k},
    \end{equation}
    is the Jacobian of $h_k(x)$ evaluated at the mean.
\end{lemma}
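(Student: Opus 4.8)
The plan is to derive the stated affine map by linearizing, about the prior mean, the identity that characterizes a right-inverse, namely $h_k\!\left(h_k^\dagger(z)\right) = z$. First I would fix a branch of the right-inverse by \emph{anchoring} it at the prior mean, i.e.\ by requiring $h_k^\dagger\!\left(h_k(\mu^-_k)\right) = \mu^-_k$; this normalization is natural here because every object in the construction is referenced to the prior, and it is precisely what pins down the otherwise non-unique right-inverse when $h_k$ is not injective (for instance when the measurement space has lower dimension than the state space).

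The main computation is then short. Writing $z_0 = h_k(\mu^-_k)$, differentiating the identity $h_k\!\left(h_k^\dagger(z)\right) = z$ with respect to $z$ gives $H_k\!\left(h_k^\dagger(z)\right)\, \mathrm{D} h_k^\dagger(z) = I$, and evaluating at $z = z_0$ with the anchoring condition yields $H_k(\mu^-_k)\, \mathrm{D} h_k^\dagger(z_0) = I$; thus $\mathrm{D} h_k^\dagger(z_0)$ is a right-inverse of the Jacobian~\cref{eq:measurement-Jacobian}, which we denote $H_k^\dagger(\mu^-_k)$ and which exists provided $H_k(\mu^-_k)$ has full row rank, the differential counterpart of assuming $h_k$ has a right-inverse. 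The first-order Taylor polynomial of $h_k^\dagger$ at $z_0$ is therefore $h_k^\dagger(z) \approx h_k^\dagger(z_0) + \mathrm{D} h_k^\dagger(z_0)(z - z_0) = \mu^-_k + H_k^\dagger(\mu^-_k)\!\left(z - h_k(\mu^-_k)\right)$, and collecting the constant terms gives exactly the claimed expression. Equivalently, one may Taylor-expand $h_k$ itself about $\mu^-_k$, substitute $x = h_k^\dagger(z)$, use the right-inverse identity on the left-hand side, discard the $O(\lVert x - \mu^-_k\rVert^2)$ remainder, and solve the resulting linear relation for $h_k^\dagger(z)$ by applying $H_k^\dagger(\mu^-_k)$.

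The step I expect to require the most care is the bookkeeping of non-uniqueness: without the anchoring normalization the right-hand side is determined only up to a vector in the null space of $H_k(\mu^-_k)$, and the choice of generalized inverse $H_k^\dagger$ must be made consistently with the selection of $h_k^\dagger$ (for instance the Moore--Penrose inverse on both, matching the usage of the pseudo-inverse elsewhere in the paper). I would also state explicitly that ``first order'' here means the neglected remainder is $O\!\left(\lVert h_k^\dagger(z) - \mu^-_k\rVert^2\right)$, equivalently $o(\lVert z - z_0\rVert)$ as $z \to z_0$, so that the approximation is understood to be local to the prior mean. Once these conventions are fixed, the remaining manipulations are routine linear algebra and standard Taylor estimates.
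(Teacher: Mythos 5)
Your proposal is correct and rests on the same idea as the paper's proof: a first-order Taylor expansion of $h_k$ about the prior mean $\mu^-_k$ combined with the right-inverse relation $H_k(\mu^-_k)\,H_k^\dagger(\mu^-_k)=I$; the paper simply verifies that the stated affine map satisfies $h_k\circ h_k^\dagger(z)\approx z$ under that expansion, while you derive the map in the forward direction (and your ``equivalently'' remark is precisely the paper's argument). Your extra care in anchoring the branch of $h_k^\dagger$ at $\mu^-_k$ and in matching the choice of generalized inverse to the Jacobian's right-inverse fills in details the paper leaves to ``simple algebraic manipulation,'' but does not change the route.
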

\begin{proof}
    Observe that a first order affine expansion of $h_k(x)$ can be written as,
    \begin{equation}
        h_k(x) \approx h_k(\mu^-_k) +  H_k(\mu^-_k)( x - \mu^-_k),
    \end{equation}
    meaning that, by simple algebraic manipulation,
    \begin{equation}
        h_k\circ h_k^\dagger(z) \approx z,
    \end{equation}
    as required.
\end{proof}

Similar to~\cref{eq:CPF-Ph-Yk}, we can make use of~\cref{eq:measurement-inverse-approx} in order to approximate the measurement polyhedron.

\begin{theorem}[Extended Measurement Polyhedron Approximation]\label{thm:measurement-polyhedron-approximation}
    By simple application of~\cref{eq:measurement-inverse-approx} an approximation to the measurement \mH-polyhedron in full space is given by,
    \begin{equation}
        Ph_{Y_k} \approx H_k(\mu^-_k)^\dagger P_{Y_k} + \mu^-_k - H_k(\mu^-_k)^\dagger\, h(\mu^-_k),
    \end{equation}
    which can be written as
    \begin{equation}
    \begin{multlined}
        Ph_{Y_k}  = H_k(\mu^-_k)^\dagger P(A_{Y_k},\,\, b_{Y_k})  + \mu^-_k - H_k(\mu^-_k)^\dagger\, h(\mu^-_k) \\
        =  Ph(A_{Y_k} H_k(\mu^-_k),\,\, b_{Y_k} + A_{Y_k}H_k(\mu^-_k)\mu^-_k - A_{Y_k}h_k(\mu^-_k)),
        \end{multlined}
    \end{equation}
    without the use of the inverse of $H_k(\mu^-_k)$.
\end{theorem}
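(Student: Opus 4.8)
The plan is to treat the first-order inverse furnished by \cref{eq:measurement-inverse-approx} as an affine map, push the measurement polytope $P_{Y_k}$ through it exactly as \cref{eq:CPF-Ph-Yk} did in the exact linear case, and then simplify the result using the \mH-polytope/\mH-polyhedron algebra recorded in \cref{sec:background}.

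First I would write the approximation of \cref{eq:measurement-inverse-approx} as $h_k^\dagger(z) \approx C z + d$ with $C = H_k(\mu^-_k)^\dagger$ and $d = \mu^-_k - H_k(\mu^-_k)^\dagger\, h_k(\mu^-_k)$. Applying this affine map to $P_{Y_k} = P(A_{Y_k}, b_{Y_k})$, which lives in measurement space, produces the stated approximation $Ph_{Y_k} \approx C\,P_{Y_k} + d$ of the full-space measurement polyhedron. I would point out here that, because $C$ is generically a non-invertible pseudo-inverse, the image is an \mH-polyhedron rather than an \mH-polytope, consistent with the remark following the affine-transformation property in \cref{sec:background}.

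Next I would split the affine map into its linear part $z \mapsto C z$ and the subsequent translation by $d$. For the linear part, the lift identity of \cref{eq:CPF-Ph-Yk} gives $H_k(\mu^-_k)^\dagger P(A_{Y_k}, b_{Y_k}) = Ph(A_{Y_k} H_k(\mu^-_k),\, b_{Y_k})$, so the bounding-hyperplane matrix becomes $A_{Y_k} H_k(\mu^-_k)$ while the right-hand side is momentarily unchanged. Translating an \mH-polyhedron $Ph(A, b)$ by a vector $d$ replaces $b$ by $b + A d$ (since $A(x - d) \le b \iff A x \le b + A d$), hence $b_{Y_k}$ becomes $b_{Y_k} + A_{Y_k} H_k(\mu^-_k)\, d$. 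Substituting $d$ and distributing yields $A_{Y_k} H_k(\mu^-_k)\mu^-_k - A_{Y_k} H_k(\mu^-_k) H_k(\mu^-_k)^\dagger h_k(\mu^-_k)$, and the assumed right-inverse property $H_k(\mu^-_k) H_k(\mu^-_k)^\dagger = I$ (which is exactly what the proof of \cref{eq:measurement-inverse-approx} established at the linearized level) collapses the last term to $A_{Y_k} h_k(\mu^-_k)$, giving the claimed expression for the new right-hand side.

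The one place I would be most careful is the bookkeeping around the pseudo-inverse: one must invoke the constraint-pullback identity \cref{eq:CPF-Ph-Yk} rather than the invertible affine-transformation formula of \cref{sec:background}, and the cancellation of $H_k(\mu^-_k) H_k(\mu^-_k)^\dagger$ is legitimate only under the right-inverse hypothesis of \cref{eq:measurement-inverse-approx} — otherwise a residual projector term $A_{Y_k}(I - H_k(\mu^-_k) H_k(\mu^-_k)^\dagger) h_k(\mu^-_k)$ would remain. Everything else is routine substitution of the block forms, together with the standing caveat that the whole expression is an approximation precisely because $h_k^\dagger$ was replaced by its first-order affine surrogate.
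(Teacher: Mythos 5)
Your proposal is correct and fills in exactly the computation the paper leaves implicit (the theorem is stated ``by simple application of'' Lemma~\ref{eq:measurement-inverse-approx} with no written proof): pull back the constraints via $H_k(\mu^-_k)^\dagger P(A_{Y_k},b_{Y_k}) = Ph(A_{Y_k}H_k(\mu^-_k), b_{Y_k})$, translate by $d = \mu^-_k - H_k(\mu^-_k)^\dagger h_k(\mu^-_k)$, and cancel $H_k(\mu^-_k)H_k(\mu^-_k)^\dagger$ using the right-inverse hypothesis. Your care in distinguishing the constraint-pullback identity from the invertible affine-transformation formula, and in flagging that the cancellation requires $H_k(\mu^-_k)H_k(\mu^-_k)^\dagger = I$, is exactly the right bookkeeping.
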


The posterior can be similarly defined as in~\cref{eq:CPF},
\begin{equation}\label{eq:ECPF}
\begin{gathered}
    p_{X^+_k}(x) = \U(x\,;\,P_{X^+_k}),\\
    P_{X^+_k} = P_{X^-_k} \cap {Ph}_{Y_k},
\end{gathered}
\end{equation}
which we term the extended convex polytope filter (ECPF).

\subsection{CPF and ECPF examples}

\begin{figure}
    \centering
    \includegraphics[width=0.49\linewidth]{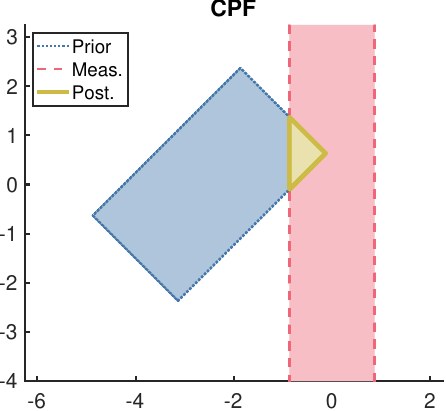}%
    \includegraphics[width=0.49\linewidth]{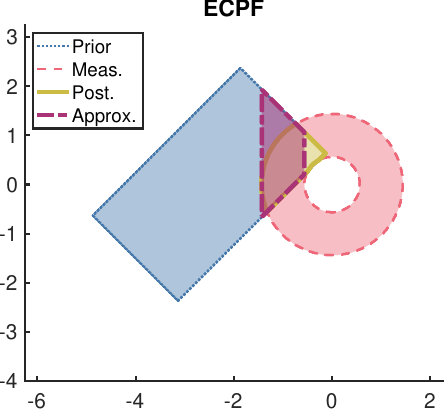}%
    \caption{Evaluation of the CPF (left panel) and the ECPF (right panel) on their respective test problems. On both panels, the left blue dotted rectangle represents the prior uniform distribution, the red dashed polyhedron (left panel) and doughnut (right panel) represent their respective measurement distributions, the solid yellow outline represents the exact posterior distribution, and the dash-dotted polytope (right panel) represents the ECPF approximation.}
    \label{fig:cpf-ecpf}
\end{figure}

We now present a simple numerical demonstration of the CPF and ECPF.
Take the 2-cube~\cref{eq:covariance-cube},
\begin{equation}\label{eq:CPF-example-prior}
    P_{X^-} = Q_{\mu^-, \Sigma^-},\quad \mu^- = \begin{bmatrix}
        -2.5\\ 0
    \end{bmatrix},\quad \Sigma^- = \begin{bmatrix}
        1 &0.5\\ 0.5 & 1
    \end{bmatrix},
\end{equation}
as the \mH-polytope that describes the prior uniform distribution.
Take the first measurement to be linear and measuring the first variable of $x$ and our uncertainty about the measurement described by a uniform distribution,
\begin{equation}
    H = \begin{bmatrix}
        1\\0
    \end{bmatrix},\quad p_{Y_k}(y) = \U(y\,;\,Q_{0, 1/4}),
\end{equation}
where $Q_{0, 1/4}$ is the interval $[-\sqrt{3}/2, \sqrt{3}/2]$. 
The above can be thought of as measuring the first state of $x$ and having uniform uncertainty of unit length centered at zero.

The \mH-polytope describing the uniform distribution of the posterior,
\begin{equation}
    P_{X^+} = Q_{\mu^-, \Sigma^-} \cap H^\dagger Q_{0, 1/4}
\end{equation}
can be exactly computed using the CPF equations~\cref{eq:CPF}. 
The left panel in~\cref{fig:cpf-ecpf} visually showcases this example.
Note that there is no approximation involved in this example as the CPF is exact for this problem setup.

For the ECPF we require a non-linear measurement operation, thus we take the range from the origin,
\begin{equation}
    h(x) = \lVert x \lVert_2, \,\, H(x) = \frac{x^T}{\lVert x \lVert_2},\quad p_{Y_k}(y) = \U(y\,;\,Q_{1, 1/16}),
\end{equation}
where $Q_{1, 1/16}$ is a doughnut in the state space. 
The \mH-polytope describing the uniform distribution of the posterior,
\begin{equation}
    P_{X^+} = Q_{\mu^-, \Sigma^-, n_\sigma} \cap \left[H(\mu^-)^\dagger Q_{1,1/16} - \mu^- + H_k(\mu^-)^\dagger\, h(\mu^-)\right]
\end{equation}
is approximated using the ECPF equations~\cref{eq:ECPF}. 
The right panel in~\cref{fig:cpf-ecpf} visually showcases this example.
Note that the approximated posterior \mH-polytope does not fully cover the true posterior, and that the approximated posterior has density where the true posterior does not, and vice versa.

\section{Kalmanized Convex Polytope Filter}
\label{sec:Kalmanized-convex-polytope-filter}

The CPF~\cref{eq:CPF} and ECPF~\cref{eq:ECPF} have two key limitations: i) that they require a uniform distribution on the measurement errors, and ii) that all posterior distributions have supports that are convex subsets of their respective priors.
Limitation i) is one of particular importance as many measurement errors are given in terms of standard deviations and not in terms of confidence intervals meaning that our knowledge about the measurement is not uniform.
Limitation ii) on the other hand is only a limitation when we are attempting to approximate Bayesian inference by deliberately simplifying our knowledge about the prior, which in general is the case in almost all practical applications and is thus of particular relevance to the practitioner.
In order to alleviate these problems we make use of the Kalman filter~\cite{kalman1960new} to `Kalmanize' the CPF and ECPF.

We now briefly describe the ideas underlying the Kalman filter.
Let all our information about the prior be expressed by its mean and covariance,
\begin{equation}\label{eq:prior-mean-covariance}
    \mathbb{E}[X^-_k] = \mu^-_k,\quad \mathbb{V}[X^-_k] = \Sigma^-_k,
\end{equation}
and let our uncertainty about the measurement be centered (have mean) on the measurement and have a known covariance,
\begin{equation}\label{eq:measurement-mean-covariance}
    \mathbb{E}[Y_k] = y_k,\quad \mathbb{V}[Y_k] = R_k,
\end{equation}
with no other information about our uncertainty known.

The Kalman filter equations are given by,
\begin{equation}\label{eq:Kalman-filter}
\begin{aligned}
    \mu^+_k &= \mu^-_k - K_k(H_k \mu^-_k - y_k),\\
    \Sigma^+_k &= (I_n - K_k H_k)\Sigma^-_k,\\
    K_k &= \Sigma^-_k H_k^T{(H_k \Sigma^-_k H_k^T + R_k)}^{-1},
\end{aligned}
\end{equation}
where the matrix $K$ is known as the Kalman gain, describe the update of the mean and covariance from the prior mean and covariance ($\mu^-_k$, $\Sigma^-_k$) to the posterior mean and covariance ($\mu^+_k$, $\Sigma^+_k$).

The Kalman filter is the best linear unbiased estimator of the given information~\cite{asch2016data,reich2015probabilistic}.
While methods such as the principle of maximum entropy~\cite{jaynes2003probability} would claim that the information in~\cref{eq:prior-mean-covariance} and~\cref{eq:measurement-mean-covariance} is sufficient to posit a Gaussian assumption, it is entirely unnecessary and carries with it extra `naturalness' assumptions.
This means that, most importantly, a frequent misconception about the Kalman filter is that it requires any type of Gaussian assumption, as it does not~\cite{uhlmann2024gaussianity,humpherys2012fresh}.
The Kalman filter is the best possible linear filter applied to all problems where only means and covariances are available to describe our knowledge irrespective of the underlying distributions involved.
This has also been shown through the lens of linear control variate theory in~\cite{popov2021multifidelity}.

Therefore we can comfortably \textit{Kalmanize} the CPF~\cref{eq:CPF} provided that we can know the mean and covariance of the uniform distribution on the prior~\cref{eq:CPF-prior-uncertainty}, and that we make the \textit{false but useful} assumption that that is all the information that we know.
In order to perform a Kalman-esque transform on the prior we need to be able to transform the underlying polytope $P_{X^-_k}$ through the use of the Kalman equations~\cref{eq:Kalman-filter}.
This is not straightforward as there are two different transformation formulas for the mean and covariance available in~\cref{eq:Kalman-filter}.
Fortunately we can take advantage of formulas derived for an ensemble Kalman square root filter~\cite{tippett2003ensemble,whitaker2002ensemble}, which present a modified Kalman gain,
\begin{equation}\label{eq:modified-Kalman-gain}
    \widetilde{K}_k = \Sigma^-_k H_k^T{\left(H_k \Sigma^-_k H_k^T + R\left(I_m + \sqrt{I_m + R^{-1}H_k \Sigma^-_k H_k^T}\right) \right)}^{-1}
\end{equation}
through which we can update \textit{anomalies} of the filter, $A^-_k = X^-_k - \mu^-_k$, as, 
\begin{equation}
    A^+ = (I_n - \widetilde{K}_k H_k) A^-_k.
\end{equation}
We are now ready to combine all these results into an affine update on the prior \mH-polytope.

\begin{theorem}[Kalmanized convex polytope filter]\label{thm:KCPF}
    The affine transformation that defines the Kalmanized convex polytope filter can  be written as,
    \begin{equation}\label{eq:KCPF}
    \begin{gathered}
        p_{X^+_k}(x) = \U_{P_{X^+_k}}(x),\\
        P_{X^+_k} = (I_n - \widetilde{K}_k H_k ) P_{X^-_k} + (\widetilde{K}_k - K_k)H_k\mu^-_k + K_k y_k,
    \end{gathered}
    \end{equation}
    where $p_{X^+_k}(x)$ is the distribution of the posterior and $P_{X^+_k}$ is an affine transformation of the prior \mH-polytope $P_{X^-_k}$.
\end{theorem}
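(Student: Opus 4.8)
The plan is to build the posterior random variable $X^+_k$ explicitly from the Kalmanized update rule, show it is an \emph{invertible} affine image of the prior $X^-_k$, and then invoke two facts already recorded in \cref{sec:background}: that an invertible affine map sends an \mH-polytope to an \mH-polytope with the stated hyperplane data, and that the pushforward of a uniform density under such a map is again uniform, since both the density and the change-of-variables Jacobian are constant.

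First I would write the Kalmanized posterior as the updated mean plus the updated anomaly, $X^+_k = \mu^+_k + A^+_k$, where $\mu^+_k = \mu^-_k - K_k(H_k\mu^-_k - y_k)$ is the Kalman mean update from \cref{eq:Kalman-filter} and $A^+_k = (I_n - \widetilde{K}_k H_k)A^-_k = (I_n - \widetilde{K}_k H_k)(X^-_k - \mu^-_k)$ is the square-root anomaly update with the modified gain \cref{eq:modified-Kalman-gain}. Expanding and cancelling the two copies of $\mu^-_k$ that appear gives
\begin{equation}
  X^+_k = (I_n - \widetilde{K}_k H_k)\,X^-_k + (\widetilde{K}_k - K_k)H_k\mu^-_k + K_k y_k,
\end{equation}
that is, $X^+_k = C X^-_k + d$ with $C = I_n - \widetilde{K}_k H_k$ and $d = (\widetilde{K}_k - K_k)H_k\mu^-_k + K_k y_k$, which is exactly the affine transformation in the statement. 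I would also record the consistency checks $\mathbb{E}[X^+_k] = C\mu^-_k + d = \mu^+_k$ and $\mathbb{V}[X^+_k] = C\Sigma^-_k C^T = (I_n - K_k H_k)\Sigma^-_k = \Sigma^+_k$, the latter being precisely the identity that the modified gain \cref{eq:modified-Kalman-gain} is constructed to enforce (cf.\ \cite{tippett2003ensemble,whitaker2002ensemble}), so the Kalmanized polytope really does carry the Kalman posterior mean and covariance.

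Next I would apply the affine-transformation identity for \mH-polytopes from \cref{sec:background}: as long as $C$ is invertible, $C P_{X^-_k} + d = P(A_{X^-_k}C^{-1},\, b_{X^-_k} + A_{X^-_k}C^{-1}d)$ is an \mH-polytope, which we name $P_{X^+_k}$. Since $X^-_k \sim \U(P_{X^-_k})$ has density constant on $P_{X^-_k}$ and the Jacobian factor $\lvert\det C\rvert^{-1}$ is constant, the density of $X^+_k = C X^-_k + d$ is constant on $C P_{X^-_k} + d$ and zero elsewhere, i.e.\ $p_{X^+_k}(x) = \U(x\,;\,P_{X^+_k})$, completing the argument.

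The main obstacle is invertibility of $C = I_n - \widetilde{K}_k H_k$, which is needed both to stay within the class of \mH-polytopes (a singular $C$ would instead yield a lower-dimensional \mH-polyhedron, on which the uniform law is not defined in the usual sense) and to justify the change of variables. I would handle this via the square-root structure: when $\Sigma^-_k$ is positive definite, $(I_n - \widetilde{K}_k H_k)\Sigma^-_k(I_n - \widetilde{K}_k H_k)^T = (I_n - K_k H_k)\Sigma^-_k = \Sigma^+_k \succ 0$, so $C$ carries a full-rank matrix to a full-rank matrix and is therefore nonsingular. This is the single place where the proof genuinely relies on $\widetilde{K}_k$ being the square-root gain rather than the ordinary Kalman gain $K_k$.
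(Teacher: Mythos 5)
Your proof is correct and follows essentially the same route as the paper: combining the Kalman mean update with the square-root anomaly update $A^+_k = (I_n - \widetilde{K}_k H_k)A^-_k$ and collapsing the result into a single affine map on $X^-_k$. You go further than the paper by explicitly verifying that $C = I_n - \widetilde{K}_k H_k$ is invertible (via $C\Sigma^-_k C^T = \Sigma^+_k \succ 0$) and that the pushforward of the uniform law stays uniform --- details the paper leaves implicit but which are genuinely needed for $P_{X^+_k}$ to remain an \mH{}-polytope rather than degenerating to a lower-dimensional polyhedron.
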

\begin{proof}
    It suffices to show show the affine transformation~\cref{eq:KCPF} affects $X^-_k$.
    Observe that it is possible to combine the mean and anomaly updates as follows,
    \begin{equation*}
        X^+_k = \underbrace{\mu^-_k - K_k(H_k \mu^-_k - y_k)}_{\mu^+_k} +  \underbrace{(I_n - \widetilde{K}_k H_k) A^-_k}_{A^+_k},
    \end{equation*}
    which can be written as the following affine transformation on $X^-_k$,
    \begin{equation*}
        X^+_k = (I_n - \widetilde{K}_k H_k) X^-_k + (\widetilde{K}_k - K_k)H_k\mu^-_k + K_k y_k,
    \end{equation*}
    as required.
\end{proof}
We term~\cref{eq:KCPF} the Kalmanized convex polytope filter (KCPF).

Observe that for a non-linear measurement~\cref{eq:non-linear-measurement} it is possible to write the mean and anomaly update as follows,
\begin{equation}
\begin{aligned}
    \mu^+_k &= \mu^-_k - K_k(h_k(\mu^-_k) + H_k(\mu^-_k) \mu^-_k - H_k(\mu^-_k) \mu^-_k - y_k),\\
    A^+_k &= (I_n - \widetilde{K}_k H_k(\mu^-_k)) A^-_k,
\end{aligned}
\end{equation}
where $H_k(\mu^-_k)$ is the measurement Jacobian~\cref{eq:measurement-Jacobian} evaluated at the prior mean, $K_k$ is the Kalman gain from~\cref{eq:Kalman-filter} evaluated using the measurement Jacobian, and  $\widetilde{K}_k$ is the modified Kalman gain from~\cref{eq:modified-Kalman-gain}.
We are again ready to combine all these results into an affine update on the prior \mH-polytope for non-linear measurements.

\begin{corollary}
    Without proof, similar to~\cref{thm:KCPF}, the extended Kalmanized convex polytope filter can be written as,
    \begin{equation}\label{eq:EKCPF}
    \begin{gathered}
        p_{X^+_k}(x) = \U(x\,;\,P_{X^+_k}),\\
        P_{X^+_k} = (I_n - \widetilde{K}_k H_k(\mu^-_k) ) P_{X^-_k} + \widetilde{K}_k H_k(\mu^-_k)\mu^-_k + K_k y_k - K_k h_k(\mu^-_k),
    \end{gathered}
    \end{equation}
    where again $p_{X^+_k}(x)$ is the distribution of the posterior and $P_{X^+_k}$ is an affine transformation of the prior \mH-polytope $P_{X^-_k}$.
\end{corollary}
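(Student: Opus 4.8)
The plan is to reproduce the structure of the proof of \cref{thm:KCPF} verbatim, now tracking the two bookkeeping devices that make the nonlinear case work: the \emph{exact} evaluation $h_k(\mu^-_k)$ in the mean update, and the \emph{linearized} innovation operator $H_k(\mu^-_k)$ in the anomaly update, the latter justified by the first-order expansion $h_k(x)\approx h_k(\mu^-_k)+H_k(\mu^-_k)(x-\mu^-_k)$ underlying \cref{eq:measurement-inverse-approx}. First I would simplify the displayed mean update stated just above the corollary: the two copies of $H_k(\mu^-_k)\mu^-_k$ cancel, leaving $\mu^+_k=\mu^-_k-K_k\big(h_k(\mu^-_k)-y_k\big)$, while the anomaly update stays $A^+_k=(I_n-\widetilde K_k H_k(\mu^-_k))A^-_k$.

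Next I would write $X^-_k=\mu^-_k+A^-_k$ and substitute into $X^+_k=\mu^+_k+A^+_k$, exactly as in \cref{thm:KCPF}. This gives $X^+_k=(I_n-\widetilde K_k H_k(\mu^-_k))X^-_k+\mu^+_k-(I_n-\widetilde K_k H_k(\mu^-_k))\mu^-_k$; expanding the constant term with the simplified $\mu^+_k$ collapses $\mu^-_k-\mu^-_k$ and leaves $\widetilde K_k H_k(\mu^-_k)\mu^-_k+K_k y_k-K_k h_k(\mu^-_k)$, which is precisely the additive term in \cref{eq:EKCPF}. Hence $X^+_k$ is the affine image of $X^-_k$ under the map $x\mapsto (I_n-\widetilde K_k H_k(\mu^-_k))x + \widetilde K_k H_k(\mu^-_k)\mu^-_k+K_k y_k-K_k h_k(\mu^-_k)$. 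As a sanity check I would substitute $h_k(x)=H_kx$, $H_k(\mu^-_k)=H_k$ to recover the constant $(\widetilde K_k-K_k)H_k\mu^-_k+K_ky_k$ of \cref{eq:KCPF}, confirming that the EKCPF reduces to the KCPF on linear measurements.

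To finish, I would invoke the closure property from the background section: an invertible affine transformation of an \mH-polytope is again an \mH-polytope with the transformed $(A,b)$ pair, and an affine map scales Lebesgue measure by a constant factor, so it pushes $\U(P_{X^-_k})$ forward to $\U(P_{X^+_k})$ with $P_{X^+_k}$ as stated. The only genuine obstacle — and presumably why the statement is offered ``without proof'' as a corollary — is the invertibility of $I_n-\widetilde K_k H_k(\mu^-_k)$: if it fails, the image is merely an \mH-polyhedron and the pushforward density degenerates. For the deflated gain $\widetilde K_k$ of \cref{eq:modified-Kalman-gain} this matrix is generically nonsingular, so I would record this as a standing assumption, mirroring the invertibility hypothesis already used implicitly in \cref{thm:KCPF}. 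Beyond that the argument is pure linear bookkeeping, and the one place to be careful is not to interchange $K_k$ and $\widetilde K_k$ when separating the mean contribution from the anomaly contribution.
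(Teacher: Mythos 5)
Your derivation is correct and takes exactly the route the paper intends by its phrase ``similar to \cref{thm:KCPF}'': simplify the displayed mean update to $\mu^+_k=\mu^-_k-K_k\left(h_k(\mu^-_k)-y_k\right)$, substitute $X^+_k=\mu^+_k+(I_n-\widetilde K_k H_k(\mu^-_k))(X^-_k-\mu^-_k)$, and collect the constant term, which reproduces \cref{eq:EKCPF} and correctly reduces to \cref{eq:KCPF} in the linear case. Your observation that $I_n-\widetilde K_k H_k(\mu^-_k)$ must be invertible for the image to remain an \mH{}-polytope (rather than merely an \mH{}-polyhedron) is a genuine hypothesis the paper leaves implicit in both \cref{thm:KCPF} and this corollary, and is worth recording.
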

We term~\cref{eq:EKCPF} the extended Kalmanized convex polytope filter (EKCPF).

\subsection{KCPF and EKCPF examples}
\begin{figure}
    \centering
    \includegraphics[width=0.49\linewidth]{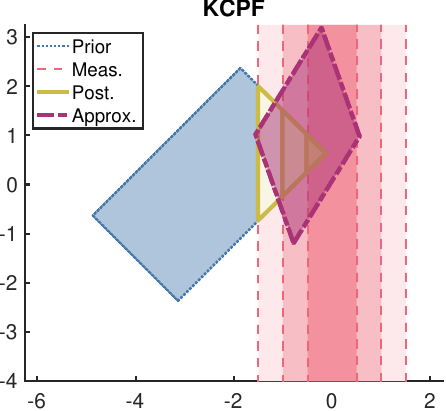}%
    \includegraphics[width=0.49\linewidth]{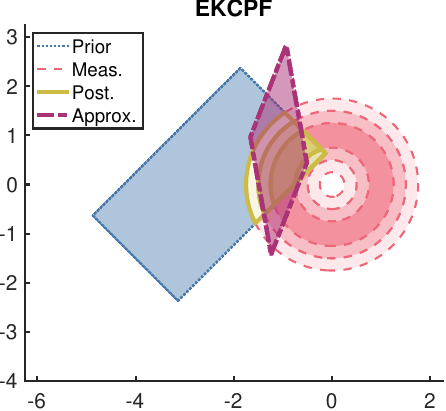}%
    \caption{Evaluation of the KCPF (left panel) and the EKCPF (right panel) on their respective test problems. On both panels, the left blue dotted rectangle represents the prior uniform distribution, the red dashed outlines represent their respective measurement distributions with each line representing one, two, and three standard deviations, the solid yellow outline represents the exact posterior distribution again with one, two, and three standard deviations, and the dash-dotted polytopes represents the KCPF and EKCPF approximations in the left and right panels respectively.}
    \label{fig:kcpf-ekcpf}
\end{figure}

We now present a simple numerical demonstration of the KCPF and EKCPF.
As in the previous example, take \cref{eq:CPF-example-prior} as the \mH-polytope that describes the prior uniform distribution.
Take the first measurement to be linear and measuring the first variable of $x$ and our uncertainty about the measurement described by a normal distribution,
\begin{equation}
    H = \begin{bmatrix}
        1\\0
    \end{bmatrix},\quad p_{Y_k}(y) = \mathcal{N}(y; 0, 1/4),
\end{equation}
where the normal distribution has mean zero and variance of $1/4$.
The above can be thought of as measuring the first state of $x$ and having normal uncertainty centered at zero.

The \mH-polytope approximating the uniform distribution of the posterior can be approximated using the KCPF equations~\cref{eq:KCPF}. 
The left panel in~\cref{fig:kcpf-ekcpf} visually showcases this example.
Note that the posterior \mH-polytope is a relatively poor approximation to the optimal (in KL-sense) uniform approximation to the posterior.

For the EKCPF we require a non-linear measurement operation we again take the range from the origin,
\begin{equation}\label{eq:EKCPF-example-measurement}
    h(x) = \lVert x \lVert_2, \,\, H(x) = \frac{x^T}{\lVert x \lVert_2},\quad p_{Y_k}(y) = \mathcal{N}(y; 1, 1/16),
\end{equation}
where this time we have a measurement of one with normally distributed measurement error with covariance $1/16$. 
The \mH-polytope describing the uniform distribution of the posterior, is approximated using the EKCPF equations~\cref{eq:EKCPF}. 
The right panel in~\cref{fig:cpf-ecpf} visually showcases this example.
Note again that the approximated posterior \mH-polytope does not fully cover the true posterior, but does cover most of one standard deviation.



\section{Ensemble Polytope Filters}
\label{sec:ensemble-polytope-filters}

Instead of being given a prior polytope like in~\cref{sec:convex-polytope-filter} or mean and covariance information like in~\cref{sec:Kalmanized-convex-polytope-filter}, we instead assume that the prior information that we have comes from some underlying distribution $p_{X^-_k}(x)$ from which we are given a finite collection of $N$ samples,
\begin{equation}\label{eq:ensemble}
       \underbrace{x_k^{-,(1)},\,\, x_k^{-,(2)},\,\, \dots,\,\, x_k^{-,(N)}}_{\sim p_{X^-_k}(x)},
\end{equation}
using which our goal is to update the ensemble by making use of measurement information.

The literature is filled with methods to perform such operations in an approximate manner.
Few of these methods, however converge (in the limit of ensemble size $N$, in distribution) to `true' Bayesian inference assuming all relevant information information is accounted for in the prior.


One convergent approach to convergent sequential filtering with ensembles is the bootstrap particle filter~\cite{stewart1992use, reich2015probabilistic}, which reweighs then resamples from the prior ensemble~\cref{eq:ensemble}.
This is done by making an implicit assumption that the underlying probability distribution of~\cref{eq:ensemble} is a sum of $\delta$-distributions on the prior ensemble and the often wrong assumption that there is no other possible external information about it.

Another approach to convergent sequential filtering is the ensemble Gaussian mixture filter (EnGMF)~\cite{anderson1999monte, liu2016efficient,yun2022kernel,popov2024adaptive}.
The EnGMF acts in three phases: i) Gaussian kernel density estimation~\cite{silverman2018density}, ii) the Gaussian sum update~\cite{sorenson1971recursive,anderson1999monte}, and iii) resampling from a posterior Gaussian mixture~\cite{popov2024adaptive}.
In a way this procedure can be thought of as a discrete-to-continuous approximation, manipulation of the continuous distribution, followed by a continuous-to-discrete transformation.
In particular the kernel density estimation step assumes that the prior ensemble~\cref{eq:ensemble} offers a limited view of the underlying prior probability distribution, and that it is possible for us to build a better estimate than that of a simple $\delta$-distribution sum approximation.
A more general description of ensemble mixture model filters on which this work is based can be found in~\cite{popov2024Epanechnikov}.



In the following sections our goal is first to i) build a general approximation of the prior with an \mH-polytope mixture from the ensemble~\cref{eq:ensemble},
\begin{equation}\label{eq:prior-polytope-mixture}
    p_{X^-_k}(x) \approx \sum_{i = 1}^N w^{X^-_k}_i\,\mathcal{U}\left(x\,;\,P_i^{X^-_k}\right),
\end{equation}
where $w^{X^-_k}_i$ are $N$ weights that sum to unity and represent our plausibility about any given mixture term.
Then ii) perform some update step to create a posterior polytope mixture model,
\begin{equation}\label{eq:posterior-polytope-mixture}
    p_{X^+_k}(x) \approx \sum_{i=1}^L w^{X^+_k}_i\,\U\left(x\,;\,P_i^{X^+_k}\right),
\end{equation}
where $w_i^{X^+_k}$ are weights that sum to one and each $P_i^{X^+_k}$ is an \mH-polytope.
Finally, iii) resample from~\cref{eq:posterior-polytope-mixture} to create new particles.
We first present a naive way to perform step i), then a way to perform step iii), and finally three different ways to perform step ii), as all the filters that we present only differ therein.

\subsection{Kernel density estimation using \texorpdfstring{\mH}{H}-polytopes}
\label{sec:En-CDE}

Instead of dealing with the problem of general density estimation using \mH-polytopes, in this work we restrict ourselves to kernel density estimation using \mH-polytopes with known mean and covariance, namely the mean shifted and covariance scaled cubes defined by~\cref{eq:covariance-cube} in order to create a mixture of the form~\cref{eq:prior-polytope-mixture}.
Similar to the EnGMF that takes advantage of Gaussian mixtures centered at each ensemble member~\cref{eq:ensemble} with a common covariance, we center uniform distributions on the scaled $n$-cube~\cref{eq:covariance-cube} with a common covariance, begetting the mixture model,
\begin{equation}\label{eq:prior-cubes}
    p_{X^-_k}(x) \approx \frac{1}{N}\sum_{i = 1}^N \mathcal{U}\left(x\,;\,Q_{x_{k}^{-,(i)}, h_\U^2 \Sigma^-_k}\right),
\end{equation}
where each uniform distribution is centered at each particle $x_{k}^{-,(i)}$ with covariance $h_\U^2 \Sigma^-_k$ where $\Sigma^-_k$ is the global covariance of the system (or, if not available, an estimate of the covariance of the ensemble~\cref{eq:ensemble}), and $h_\U$ is a bandwidth. 
We now provide one type of optimal bandwidth.

\begin{theorem}
Assuming that the underlying samples~\cref{eq:ensemble} come from a Gaussian distribution, the optimal bandwidth in terms of asymptotic mean integral squared error (AMISE) is given by,
\begin{equation}
    h_{\U} = {\left[ \frac{4\sqrt{\frac{\pi}{3}}^n n}{(n + 2)N} \right]}^{\frac{1}{n+4}}
\end{equation}
which is known as the Silverman~\cite{silverman2018density} bandwidth.
\end{theorem}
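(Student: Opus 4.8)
The plan is to run the classical rule-of-thumb argument \cite{silverman2018density}: write the asymptotic mean integrated squared error (AMISE) of the estimator \eqref{eq:prior-cubes} as an integrated-variance term of order $h_{\mathcal U}^{-n}$ plus an integrated-squared-bias term of order $h_{\mathcal U}^{4}$, and minimize the resulting scalar function of $h_{\mathcal U}$.

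First I would pass to a prewhitened problem. Since $Q_{x,h^{2}\Sigma}$ has covariance $h^{2}\Sigma$ by \eqref{eq:covariance-cube}, the change of variables $x\mapsto\Sigma^{-1/2}x$ turns \eqref{eq:prior-cubes} into ordinary kernel density estimation with scalar bandwidth $h_{\mathcal U}$, base kernel $K$ equal to the uniform density on the axis-aligned cube $[-\sqrt 3,\sqrt 3]^{n}$ (the unique such cube on which the uniform law has mean $0$ and covariance $I_{n}$), and --- under the Gaussian hypothesis --- reference density $f=\mathcal N(0,I_{n})$. The Jacobian constant introduced by this change of variables multiplies the MISE but not its minimizer, so it can be dropped; and, $f$ being smooth with all moments finite while $K$ has finite second moment, the standard AMISE expansion applies.

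Second, I would assemble the three constants appearing in the multivariate AMISE,
\[
\mathrm{AMISE}(h_{\mathcal U}) = \frac{R(K)}{N\,h_{\mathcal U}^{\,n}} + \frac{h_{\mathcal U}^{4}\,\mu_2(K)^{2}}{4}\,\Psi(f),\qquad \Psi(f)=\int_{\mathbb R^{n}}\bigl(\nabla^{2}f(x)\bigr)^{2}\,\mathrm{d} x,
\]
namely: (a) the kernel roughness $R(K)=\int K^{2}=(2\sqrt 3)^{-n}=12^{-n/2}$, read off from the constant value of a uniform density on a cube of volume $(2\sqrt 3)^{n}$; (b) the kernel variance constant $\mu_2(K)=1$, immediate from the identity-covariance normalization of $K$; and (c) the Gaussian roughness $\Psi(\phi)$, evaluated from $\nabla^{2}\phi=(\lVert x\rVert^{2}-n)\,\phi$ by rewriting $\phi^{2}$ as a constant multiple of the $\mathcal N(0,\tfrac{1}{2} I_{n})$ density and computing the mean and variance of the associated scaled chi-square variable, which gives $\Psi(\phi)=\tfrac{1}{4}\,n(n+2)\,2^{-n}\pi^{-n/2}$.

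Third, I would differentiate $\mathrm{AMISE}$ in $h_{\mathcal U}$, set the derivative to zero to obtain $h_{\mathcal U}^{\,n+4}=n\,R(K)\big/ \bigl(N\,\mu_2(K)^{2}\,\Psi(\phi)\bigr)$, substitute the three quantities, and collapse the surviving powers of $2$, $3$, $\pi$ and $12$ using $2^{n}\pi^{n/2}12^{-n/2}=\sqrt{\pi/3}^{\,n}$; taking the $(n+4)$-th root then puts the minimizer in the closed form displayed in the statement. I expect the only real obstacle to be the constant bookkeeping in step two --- above all pinning down $\Psi(\phi)$ with the correct power of $\pi$, the correct $n(n+2)$ growth, and the correct bias convention (Laplacian versus full Hessian of $f$) consistent with the AMISE expansion --- since any slip there propagates verbatim into the constant inside the bracket. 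Everything else is routine multivariable calculus.
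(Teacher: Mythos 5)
Your proposal follows essentially the same route as the paper's proof: both invoke Silverman's AMISE rule of thumb $h^{n+4}=nR(K)/\bigl(N\mu_2(K)^2\Psi(f)\bigr)$, compute the kernel roughness $R(K)=\int K^2=\Vol(Q_{0,I})^{-1}=12^{-n/2}$ from the volume of the unit-covariance cube, and insert the Gaussian roughness constant $\Psi(\phi)=\tfrac{1}{4}n(n+2)\,2^{-n}\pi^{-n/2}$, which the paper simply quotes as $\gamma$ and you derive explicitly via the scaled chi-square computation --- a welcome extra level of detail.

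One bookkeeping caveat: if you actually carry out the substitution you describe, the $n$ in the numerator of the optimality condition cancels against the $n$ in the $n(n+2)$ factor of $\Psi(\phi)$, leaving $h_{\mathcal{U}}^{\,n+4}=4\sqrt{\pi/3}^{\,n}\big/\bigl((n+2)N\bigr)$ rather than the displayed $4\sqrt{\pi/3}^{\,n}\,n\big/\bigl((n+2)N\bigr)$. The extra factor of $n$ appears only in the theorem as stated (plugging the paper's own $\beta$ and $\gamma$ into its formula $h_{\mathcal{U}}=[\beta n/(\gamma N)]^{1/(n+4)}$ likewise produces no such factor), so your closing claim that the algebra lands exactly on the displayed closed form should be revisited: the method and all three constants are right, but the constant you would actually obtain differs from the statement by $n^{1/(n+4)}$. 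As a sanity check, the same cancellation is what yields the classical $\bigl(4/(n+2)\bigr)^{1/(n+4)}N^{-1/(n+4)}$ for the Gaussian kernel.
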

\begin{proof}
    For a kernel with mean zero and unit covariance, by the derivation in~\cite{silverman2018density} and exposition in~\cite{popov2024Epanechnikov}, the optimal bandwidth is given by
    \begin{equation}
       h_{\U} = \left[\frac{\beta n}{\gamma N}\right]^{\frac{1}{n + 4}},
    \end{equation}
    where the first parameter, $\beta$, is given by,
    \begin{equation}
        \beta = \int_{\mathbb{R}^n} \mathcal{K}(x)^2 \mathrm{d} x,\\
    \end{equation}
    with, without loss of generality (by simple affine transformation), $\mathcal{K}$ being the underlying kernel with mean zero and unit covariance.
    The parameter $\gamma$ depends on the distribution from which the samples are drawn.
    As we assume that the underlying samples are drawn from a Gaussian, the parameter $\gamma$ is given by,
    \begin{equation}
        \gamma = \frac{1}{2^n \sqrt{\pi}^n}\left(\frac{1}{2} n + \frac{1}{4}n^2\right).
    \end{equation}
    Observe further that as the kernel is uniform over the unit $n$-cube, that,
    \begin{equation}
        \beta = \int_{Q_{0,I}} (\U_{Q_{0,I}}(x))^2 \mathrm{d}x = \Vol(Q_{0,I})^{-1} = \sqrt{12}^{-n}, 
    \end{equation}
    and by simple algebraic manipulation the proof is complete.
\end{proof}

We now present a result about the convergence of this prior mixture to the true distribution that follows the theory presented in~\cite{popov2024adaptive}.
\begin{theorem}\label{thm:asymptotic-prior-cube-convergence}
    In the asymptotic limit of ensemble size $N\to\infty$, the prior mixture model~\cref{eq:prior-cubes} converges to the true distribution of the prior of the ensemble~\cref{eq:ensemble}.
\end{theorem}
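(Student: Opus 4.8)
The plan is to recognize \cref{eq:prior-cubes} as a Parzen--Rosenblatt kernel density estimator whose kernel is the uniform density on the unit $n$-cube, pushed forward by the affine map of \cref{eq:covariance-cube}, and to invoke the classical consistency argument for such estimators exactly as was done for the Gaussian kernel in \cite{popov2024adaptive}. First I would verify that the cube kernel satisfies the hypotheses required of a KDE kernel: it is nonnegative, integrates to one, has mean zero and finite (indeed unit, after the normalization in \cref{eq:covariance-cube,eq:matrix-sqrt}) covariance, and is bounded with compact support. Second, I would confirm that the Silverman bandwidth of the preceding theorem behaves correctly in the limit, namely $h_\U \to 0$ and $N h_\U^n \to \infty$ as $N\to\infty$; the latter holds because $N h_\U^n \propto N^{4/(n+4)}\to\infty$.

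With these two facts the standard bias--variance decomposition applies. Write the expected value of the estimator as the convolution of the true prior density of \cref{eq:ensemble} with the scaled cube kernel; since that kernel is an approximate identity as $h_\U\to 0$, the bias tends to zero at (Lebesgue-)almost every point, and uniformly on compact sets where the true density is continuous. The variance at a point is $\mathcal{O}\!\big((N h_\U^n)^{-1}\big)\to 0$ by the second fact. Combining the two, the estimator \cref{eq:prior-cubes} converges in mean square, hence in probability, to the true prior density at its continuity points; an application of Scheff\'e's lemma together with the $L^1$ form of the estimate, following \cite{popov2024adaptive}, upgrades this to convergence in $L^1$ and therefore to convergence in distribution of the associated random variable, which is the asserted statement.

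The main obstacle is the bookkeeping around the covariance-scaled bandwidth $h_\U^2\Sigma^-_k$ rather than a scalar bandwidth. If $\Sigma^-_k$ is the true system covariance this is merely a fixed linear change of variables and the argument above goes through verbatim; if instead $\Sigma^-_k$ is the sample covariance of \cref{eq:ensemble}, one must add a law-of-large-numbers step showing that this sample covariance converges almost surely to a fixed positive-definite limit (using finiteness of the prior's second moments) and a Slutsky-type argument that this convergence does not perturb the limiting density. A secondary, more technical point is that the cube kernel is discontinuous, so the bias term cannot be controlled by a Taylor expansion of the kernel; it must instead be bounded through continuity and integrability of the \emph{true} density, which is precisely why the natural conclusion is convergence in distribution (equivalently in $L^1$) rather than uniform pointwise convergence.
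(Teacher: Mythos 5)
Your proposal is correct, but it takes a genuinely different route from the paper. The paper's proof is a two-line weak-convergence argument that factors through the empirical measure: since the Silverman bandwidth satisfies $h_\U \to 0$, each scaled cube collapses to a Dirac mass at its ensemble member, so the mixture tends to the sum of $\delta$-distributions at the samples, which in turn converges to the true prior. You instead run the classical Parzen--Rosenblatt consistency argument: verify the cube is a legitimate KDE kernel, check both $h_\U \to 0$ and $N h_\U^n \propto N^{4/(n+4)} \to \infty$, and do the bias--variance decomposition to get mean-square, then $L^1$, then distributional convergence. The trade-off is instructive. The paper's route is shorter and needs only $h_\U \to 0$ plus the law of large numbers for empirical measures, but it delivers only convergence in distribution (and, as written, silently conflates ``mixture $\to$ empirical measure'' with ``empirical measure $\to$ true law''--two limits that must be taken jointly). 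Your route is more work but proves the strictly stronger statement of $L^1$ (density) convergence, makes explicit the rate condition $N h_\U^n \to \infty$ that the paper never mentions, and correctly flags two issues the paper's proof glosses over entirely: the discontinuity of the cube kernel (which rules out a Taylor-expansion bias bound and forces the approximate-identity argument) and the anisotropic bandwidth $h_\U^2 \Sigma_k^-$, which requires a Slutsky step when $\Sigma_k^-$ is estimated from the ensemble. Both arguments establish the theorem as stated; yours establishes more.
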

\begin{proof}
Observe that the Silverman bandwidth for a uniform distribution tends to zero in the limit of ensemble size,
\begin{equation*}
     \lim_{N\to\infty} h_{\U} = 0,
\end{equation*}
meaning that the prior mixture model~\cref{eq:prior-cubes} tends towards a sum of $\delta$ distributions, which tends towards the `true' prior distribution, 
\begin{equation*}
    \lim_{N\to\infty} \frac{1}{N}\sum_{i = 1}^N \mathcal{U}\left(x\,;\,Q_{x_{k}^{-,(i)}, h_\U^2 \Sigma^-_k}\right) = \lim_{N\to\infty}  \frac{1}{N}\sum_{i = 1}^N \delta\left(x\,;\,x_{k}^{-,(i)}\right) = p_{X^-_k}(x),
\end{equation*}
as required.
\end{proof}

We now present a simple quadrature rule that can be used to approximate integrals over some $n$-cube $\U(Q_{\mu,\Sigma})$ from~\cref{eq:covariance-cube}.
For the unit $n$-cube~\cref{eq:covariance-cube}, $Q_{\mu,\Sigma}$ the collection containing the center of the cube and the centers of the facets are given by,
\begin{equation}\label{eq:Omega-points}
    \Omega_{\mu,\Sigma} = \sqrt{3}\begin{bmatrix}
       -V\sqrt{\Lambda} & 0_n  & V\sqrt{\Lambda}
    \end{bmatrix} + \mu\, 1_n^T,
\end{equation}
where $V$ and $\Lambda$ are the matrices of eigenvectors and eigenvalues defined by~\cref{eq:matrix-sqrt}, and where we shift the matrix indices to say that the first $n$ columns are columns $-n$ to $-1$, the middle column is column $0$ and the last $n$ columns are columns $1$ to $n$.
Observe that the columns of~\cref{eq:Omega-points}, excluding the $0$th column, represent the centers of the facets of $Q_{\mu,\Sigma}$,
By using each center of the facet as centroid of the hyperplane and the center of the cube to define a normal vector and by using~\cref{eq:centroid-normal-hyperplane} the points $\Omega_{\mu,\Sigma}$ are capable of fully representing the $n$-cube $Q_{\mu,\Sigma}$, and thus no loss of information should occur in this alternate representation, meaning that it should be possible to recover the underlying mean and covariance of $Q_{\mu,\Sigma}$ using only~\cref{eq:Omega-points}.
This is similar to the intuition behind the `$\sigma$-points' in the unscented (Uhlmann) transform~\cite{uhlmann1995dynamic,sarkka2023bayesian}, thus we similarly the name the points~\cref{eq:Omega-points} as $\omega$-points.
We now derive the weights with which these $\omega$-points~\cref{eq:Omega-points} can be used as quadrature points to approximate integrals of interest.
\begin{theorem}[$\omega$-point quadrature rule]\label{thm:Omega-point-quadrature-rule}
    The quadrature rule on the $\omega$-points,
    \begin{equation}\label{eq:Omega-point-quadrature-rule}
        \int_{\mathbb{R}^n} f(x)\, \U(x\,;\,Q_{\mu,\Sigma}) \mathrm{d} x \approx \sum_{\ell=-n}^n W^\Omega_{\ell}\,f(\Omega_{\ell,\mu,\Sigma})
    \end{equation}
    preserves the mean and covariance when the weights are defined as,
    \begin{equation}
        W^\Omega_{0} = 1,\quad \forall \ell \in [1, \dots, n]\,, W^\Omega_{-\ell} = W^\Omega_{\ell} = \frac{n}{3}.
    \end{equation}
\end{theorem}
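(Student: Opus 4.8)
The plan is to prove~\cref{eq:Omega-point-quadrature-rule} by reducing ``preserves the mean and covariance'' to exactness of the rule on a small family of test functions, and then verifying a handful of moment identities. A weighted-point rule reproduces the mean and covariance of $\U(\cdot\,;\,Q_{\mu,\Sigma})$ exactly when it integrates, without error, the constant function, every linear functional $x\mapsto c^{T}x$, and every quadratic form $x\mapsto x^{T}Mx$ (equivalently, the matrix-valued map $x\mapsto(x-\mu)(x-\mu)^{T}$). So the claim is equivalent to the three identities $\sum_{\ell}W^{\Omega}_{\ell}=1$, $\sum_{\ell}W^{\Omega}_{\ell}\,\Omega_{\ell,\mu,\Sigma}=\mu$, and $\sum_{\ell}W^{\Omega}_{\ell}\,(\Omega_{\ell,\mu,\Sigma}-\mu)(\Omega_{\ell,\mu,\Sigma}-\mu)^{T}=\Sigma$, and I would isolate this equivalence as a one-line observation before turning to the three identities themselves.

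First I would unpack~\cref{eq:Omega-points}: writing $s_{j}$ for the $j$th column of $\Sigma^{1/2}=V\sqrt{\Lambda}$ from~\cref{eq:matrix-sqrt}, the $\omega$-points are $\Omega_{0,\mu,\Sigma}=\mu$ and $\Omega_{\pm j,\mu,\Sigma}=\mu\pm\sqrt{3}\,s_{j}$ for $j=1,\dots,n$. Two structural facts then carry the argument. First, since the prescribed weights are even, $W^{\Omega}_{-j}=W^{\Omega}_{j}$, while $\Omega_{\pm j,\mu,\Sigma}-\mu$ is odd under $j\mapsto-j$, every odd-order contribution cancels in pairs: this makes the linear identity an immediate consequence of the zeroth-moment identity, and it annihilates the cross terms $\mu s_{j}^{T}+s_{j}\mu^{T}$ that arise on expanding the quadratic identity about $\mu$. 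Second, the eigendecomposition~\cref{eq:matrix-sqrt} gives $\sum_{j=1}^{n}s_{j}s_{j}^{T}=\Sigma^{1/2}(\Sigma^{1/2})^{T}=\Sigma$. The covariance identity then collapses to $3\sum_{j=1}^{n}\bigl(W^{\Omega}_{j}+W^{\Omega}_{-j}\bigr)s_{j}s_{j}^{T}=\Sigma$, into which I would substitute the common value of the off-center weights and apply the second fact; the zeroth-moment identity is then solved for $W^{\Omega}_{0}$, and the mean identity comes for free.

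The hard part will be the bookkeeping of scalar constants rather than any conceptual step. The $\sqrt{3}$ is precisely the half-width that makes the uniform kernel on the scaled unit cube have unit variance along each axis --- the same normalization as the $\sqrt{12}$ in~\cref{eq:covariance-cube} --- and it enters squared in the second-moment sum, so it must combine with the number $n$ of eigen-axes to produce \emph{exactly} $\Sigma$ rather than a scalar multiple of it; a dropped factor or a sign error would hide here. I would also flag one structural subtlety up front: matching all of $\Sigma$ with only $2n+1$ weights looks over-determined for $n\ge 2$, but it is not, because each off-center $\omega$-point lies along a single eigenvector $v_{j}$ of $\Sigma$, so the weighted second-moment matrix is automatically diagonal in the eigenframe $V$; matching $\Sigma$ thus reduces to matching its $n$ eigenvalues, which is exactly what the $n$ free off-center weights accomplish. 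Making that diagonalization explicit is the cleanest route to the three identities.
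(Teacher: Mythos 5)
Your reduction to the three moment identities is the natural reading of ``preserves the mean and covariance,'' and your structural observations (the even weights kill all odd-order terms; $\sum_{j=1}^{n}s_j s_j^T=\Sigma^{1/2}(\Sigma^{1/2})^T=\Sigma$, writing $s_j$ for the $j$th column of $V\sqrt{\Lambda}$ as you do) are correct. But the final substitution step does not go through with the weights in the statement --- and this is exactly where you predicted a constant could hide. With $W^\Omega_0=1$ and $W^\Omega_{\pm j}=n/3$, your zeroth-moment identity gives $\sum_{\ell}W^\Omega_\ell=1+2n^2/3\neq 1$, and your covariance identity gives
\begin{equation*}
\sum_{\ell\neq 0}W^\Omega_\ell\,(\Omega_{\ell,\mu,\Sigma}-\mu)(\Omega_{\ell,\mu,\Sigma}-\mu)^T
=3\sum_{j=1}^{n}\bigl(W^\Omega_j+W^\Omega_{-j}\bigr)s_j s_j^T
=6\cdot\tfrac{n}{3}\,\Sigma=2n\,\Sigma,
\end{equation*}
off by a factor of $2n$. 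The weights that actually satisfy your three identities are the classical unscented-transform weights $W^\Omega_0=1-n/3$, $W^\Omega_{\pm j}=1/6$ (the $n+\kappa=3$ scaling), not the ones claimed in the theorem.

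The paper's own proof sidesteps this by adopting a different, implicit normalization: the mean is treated as a weight-normalized average (so only $W^\Omega_0=1$ and $W^\Omega_{-\ell}=W^\Omega_\ell$ are invoked), and the second moment is estimated as $\frac{1}{2n}\sum_{\ell=1}^{n}6\,W^\Omega_\ell\,V_\ell\Lambda_\ell V_\ell^T$ --- an average over the $2n$ off-center points carrying an extra $\tfrac{1}{2n}$ prefactor that is not present in the literal left-hand side of~\cref{eq:Omega-point-quadrature-rule} applied to $f(x)=(x-\mu)(x-\mu)^T$. Under that convention $W^\Omega_\ell=n/3$ is indeed forced. So the gap is concrete: either you adopt the paper's normalization, in which case your three identities are not the right target and must be replaced by the normalized estimators, or you keep your (standard) identities, in which case the theorem's weights are wrong and should read $W^\Omega_0=1-n/3$, $W^\Omega_{\pm j}=1/6$. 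As written, your proof fails at the line ``substitute the common value of the off-center weights,'' and repairing it requires first pinning down which estimator of the covariance the quadrature rule is meant to reproduce.
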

\begin{proof}
    Observe trivially that the condition,
    \begin{equation*}
        W^\Omega_0 = 1,\quad \forall \ell \in [1, \dots, n]\,, W^\Omega_{-\ell} = W^\Omega_{\ell},
    \end{equation*}
    ensures the preservation of the mean. 
    We can further make the assumption that all scaled eigenvectors are of equal importance, thus all that is left to define is a single off-center weight. 
    Observe further that the estimate of the covariance by~\cref{eq:Omega-point-quadrature-rule} is given by,
    \begin{equation*}
        \mathbb{V}[\U(x\,;\,Q_{\mu,\Sigma})] \approx \frac{1}{2n} \sum_{\ell=1}^n 6\, W^\Omega_{\ell}\, V_\ell \Lambda_\ell V_\ell^T,
    \end{equation*}
    which is equal to $\Sigma$ when $W^\Omega_\ell = \frac{n}{3}$ for all $\ell \in \pm[1, \dots, n]$, as required.
\end{proof}

\begin{remark}[General \mH-polytope density estimation]
    The method presented in this section is simple in form, but is certainly not the optimal way of approximating densities through \mH-polytopes.
    A more robust method with unique \mH-polytopes per ensemble member, similar to that for the EnGMF developed in~\cite{popov2025ensemble}, is of future interest.
\end{remark}

\subsection{Resampling from a uniform \texorpdfstring{\mH}{H}-polytope mixture}
\label{sec:polytope-resample}

We now provide a method to resample from the posterior uniform \mH-polytope mixture~\cref{eq:posterior-polytope-mixture}.
We initially sample an \mH-polytope from the discrete distribution defined by the posterior weights $w^{X^+_k}_{i}$.
For the sake of exposition we assume that the result of this procedure is some \mH-polytope $P_\ell = P(A_\ell, b_\ell)$.

We now present a simple algorithm to sample from a uniform distribution on an \mH-polytope that is based on the `hit-and-run' algorithm~\cite{smith1984efficient}.
The start of the algorithm requires a point inside of the polytope, which, for a generic polytope can be the Chebyshev center~\cref{eq:Chebyshev-center}.
Call this starting point $z_0$.

For some generic step $s < S$, first we uniformly sample a direction from the unit sphere embedded in $n$ dimensions,
\begin{equation}
    u_s \sim \mathcal{U}(\mathcal{S}^{n-1}).
\end{equation}
The next step is the `hit', where we extend from $z_s$ into the direction $u_s$ both backwards and forwards to compute the maximum length in either direction,
\begin{equation}
    r_{s, \pm} = \operatorname{arg\,min}_r \mp r, \quad \text{s.t.} \,\,\, r A_\ell u_s \leq b_\ell - A z_s
\end{equation}
which has a trivial closed form solution that does not involve solving a linear programming problem.
A point is sampled from the uniform distribution between this minimum and maximum,
\begin{equation}
    r_s \sim \mathcal{U}([r_{s,-}, r_{s,+}])
\end{equation}
and, finally we `run' in the direction $u_s$ with a length of $r_s$.
\begin{equation}
    z_{s+1} = z_s + r_s u_s.
\end{equation}
which we repeat until we obtain $z_S$, which we take to be our sample. 
As $S\to\infty$ the sample becomes a sample from the target distribution.

For all ensemble filters subsequently presented, given $N$ prior samples the resampling procedure in this section is used to produce $N$ samples from the posterior, as methods with varying ensemble size $N$ are outside the scope of this work.

\subsection{Bootstrap convex polytope filter}
\label{sec:BCPF-update}

We now provide the \mH-polytope analogue to the bootstrap particle filter~\cite{stewart1992use, reich2015probabilistic}.
Starting from the general \mH-polytope mixture model in~\cref{eq:prior-polytope-mixture}, assume that we have uncertainty from a measurement given by the general distribution,
\begin{equation}
    p_{Y_k}(y\,;\, y_k),
\end{equation}
where the uncertainty is centered on the measurement realization $y_k$.
One naive way in which the posterior polytope mixture can be written is,
\begin{equation}\label{eq:BCPF-posterior}
\begin{gathered}
    p_{X^+_k}(x) \approx \sum_{i = 1}^N w^{X^+_k}_{i} \mathcal{U}\left(x\,;\, P_{i}^{X^-_k}\right),\\
    w^{X^+_k}_{i} \propto w^{X^-_k}_{i}\int_{\mathbb{R}^n} \mathcal{U}\left(x\,;\,P_i^{X^-_k}\right) p_{Y_k}(h_k(x)\,;\, y_k)\mathrm{d}x,
\end{gathered}
\end{equation}
where the weights can be approximated through the quadrature rule deifned in~\cref{thm:Omega-point-quadrature-rule}.
The process of approximating the prior using a method such as in~\cref{sec:En-CDE}, updating the weights, and resampling from the posterior~\cref{eq:BCPF-posterior} as defined in~\cref{sec:polytope-resample} we term the bootstrap convex polytope filter (BCPF).
Under certain assumptions, this procedure converges to true Bayesian inference in distribution.
\begin{corollary}[BCPF convergence]\label{thm:BCPF-convergence}
    Given the prior polytope mixture as~\cref{eq:prior-cubes}, then the posterior defined by~\cref{eq:BCPF-posterior} converges to the `true' posterior by a trivial extension of~\cref{thm:asymptotic-prior-cube-convergence}.
\end{corollary}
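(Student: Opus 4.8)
The plan is to mirror the classical proof of convergence of the bootstrap particle filter, replacing the $\delta$-kernel prior approximation by the shrinking-cube mixture \eqref{eq:prior-cubes} and invoking \cref{thm:asymptotic-prior-cube-convergence} at the single place where the prior enters. For a bounded continuous test function $\varphi$, I would write the expectation of $\varphi$ under the exact posterior \eqref{eq:Bayesian-inference} as the ratio $\int \varphi(x)\, p_{Y_k}(h_k(x);y_k)\, p_{X^-_k}(x)\,\mathrm dx \,/\, \int p_{Y_k}(h_k(x);y_k)\, p_{X^-_k}(x)\,\mathrm dx$, and the corresponding expectation under the BCPF posterior \eqref{eq:BCPF-posterior} as the self-normalized sum $\sum_i w^{X^+_k}_i \int \varphi(x)\,\U(x\,;\,P_i^{X^-_k})\,\mathrm dx$. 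The goal is to show the latter converges to the former (in probability, or almost surely) as $N\to\infty$.

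First step: replace the cube integrals by point evaluations. Each cube $Q_{x^{-,(i)}_k, h_\U^2\Sigma^-_k}$ has diameter $\mathcal{O}(h_\U)\to 0$, since the Silverman bandwidth vanishes in the limit of ensemble size as already established inside the proof of \cref{thm:asymptotic-prior-cube-convergence}. Hence, using continuity of $\varphi$ and of $y\mapsto p_{Y_k}(y;y_k)$ (and local Lipschitzness of $h_k$), both $\int \varphi\,\U(\,\cdot\,;\,P_i^{X^-_k})\,\mathrm dx$ and the weight integral $\int \U(\,\cdot\,;\,P_i^{X^-_k})\, p_{Y_k}(h_k(\cdot);y_k)\,\mathrm dx$ differ from $\varphi(x^{-,(i)}_k)$ and from $g(x^{-,(i)}_k) \coloneqq p_{Y_k}(h_k(x^{-,(i)}_k);y_k)$ respectively by a term that is uniformly $o(1)$. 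The $\omega$-point quadrature of \cref{thm:Omega-point-quadrature-rule} used to evaluate these integrals in practice has nodes $\Omega_{\ell,\mu,\Sigma}$ that likewise collapse onto $x^{-,(i)}_k$ as $h_\U\to 0$, so it contributes the same vanishing error; this is the ``trivial extension'' alluded to in the statement.

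Second step: once reduced to point masses, the BCPF posterior expectation of $\varphi$ becomes exactly the self-normalized importance-sampling estimator $\sum_i \varphi(x^{-,(i)}_k)\,g(x^{-,(i)}_k) \,/\, \sum_i g(x^{-,(i)}_k)$ built from $N$ i.i.d.\ draws from $p_{X^-_k}$. By the strong law of large numbers the numerator and denominator converge almost surely to $\mathbb E_{X^-_k}[\varphi g]$ and $\mathbb E_{X^-_k}[g]$, and by the continuous mapping theorem their ratio converges to the posterior expectation of $\varphi$, provided $\mathbb E_{X^-_k}[g] > 0$ and $\mathbb E_{X^-_k}[|\varphi|\, g] < \infty$ --- precisely the standard nondegeneracy and integrability conditions for the bootstrap particle filter. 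Finally, the resampling step of \cref{sec:polytope-resample} draws $N$ new particles from a mixture whose empirical measure already converges weakly to the posterior, and a routine conditional-variance argument on the triangular array of resampled particles (as in the BPF analysis) shows the resampled empirical measure retains this weak convergence.

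The main obstacle I anticipate is the uniformity in the first step: the number of mixture components grows with $N$ while each per-component averaging error shrinks, so one must bound the weighted average of these errors regardless of the (data-dependent) weights. The cleanest route is to assume $g$ bounded and uniformly continuous --- which holds, for instance, for Gaussian measurement noise composed with continuous $h_k$ --- so that the per-component error is $o(1)$ uniformly in $i$ and hence the weighted average is $o(1)$ for any choice of weights; alternatively one imposes a moment hypothesis making the contribution of far-out particles negligible together with a tightness argument for the ensemble. Everything else is the textbook bootstrap-particle-filter argument grafted onto \cref{thm:asymptotic-prior-cube-convergence}.
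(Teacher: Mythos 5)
Your proposal is correct and follows the same route the paper intends: shrink the cube kernels via the vanishing Silverman bandwidth so the prior mixture collapses to the empirical measure, at which point the BCPF weight update reduces to the self-normalized importance-sampling estimator of the bootstrap particle filter, whose convergence is classical. The paper itself offers no proof of this corollary beyond the phrase ``trivial extension'' of \cref{thm:asymptotic-prior-cube-convergence} (whose own proof is only a heuristic limit of densities), so your write-up is strictly more careful than the source; in particular, the uniformity issue you flag --- that the number of components grows with $N$ while each per-component averaging error shrinks, so one needs $g = p_{Y_k}(h_k(\cdot);y_k)$ bounded and uniformly continuous (or a tightness/moment argument) to control the weighted average of errors --- is precisely the step the paper silently assumes away, and your proposed resolution is the standard and correct one. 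The only caveat worth adding is that the corollary concerns the mixture \cref{eq:BCPF-posterior} itself, so your final resampling paragraph is not needed for the statement as given (and in practice the paper's hit-and-run resampler with finite $S$ is only approximate, which would require a separate argument if one wanted convergence of the resampled ensemble).
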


For the BCPF, instead of using the Chebyshev center, it is computationally cheaper to start the resampling procedure in~\cref{sec:polytope-resample} from the posterior mean of each component, which is identical to the prior mean of each component.

\subsection{Ensemble convex polytope filter}
\label{sec:EnCPF-update}

We now provide the \mH-polytope analogue to the ensemble Gaussian mixture filter~\cite{anderson1999monte, yun2022kernel, liu2016efficient,popov2024adaptive}.
Starting from the general \mH-polytope mixture model in~\cref{eq:prior-polytope-mixture}, assume that we have uncertainty from a measurement given by the uniformly distributed \mH-polytope mixture,
\begin{equation}\label{eq:EnCPF-measurement-distribution}
    p_{Y_k}(y) = \sum_{j=1}^M w^{Y_k}_j\,\mathcal{U}(y\,;\,P_j^{Y_k}),
\end{equation}
where $w^{Y_k}_j$ are $M$ weights that sum to unity, then by~\cref{thm:measurement-polyhedron-approximation} the collection of $N\times M$ posterior \mH-polytopes is given by,
\begin{equation}
    P_{i,j}^{X^+_k} = P_i^{X^-_k} \cap \left[H_k(\mu^-_{i,k})^\dagger P_j^{Y_k}  + \mu^-_{i,k} - H_k(\mu^-_{i,k})^\dagger\, h_k(\mu^-_{i,k})\right],\\
\end{equation}
where $\mu_{i,k}^-$ is the mean of $\mathcal{U}(x\,;\,P_i^{X^-_k})$.

The resulting approximation to the posterior is given by,
\begin{equation}\label{eq:EnCPF-posterior}
    p_{X^+_k}(x) \approx \sum_{i = 1}^N\sum_{j=1}^M w^{X^+_k}_{i,j} \mathcal{U}(x\,;\, P_{i,j}^{X^+_k}),
\end{equation}
where the weights are given by,
\begin{equation}
    w^{X^+_k}_{i,j} \propto w^{X^-_k}_i\, \int_{\mathbb{R}^n} \mathcal{U}(x\,;\,P_i^{X^-_k}) \, \mathcal{U}(h_k(x)\,;\,P_j^{Y_k}) \mathrm{d}x,
\end{equation}
which, for non-linear measurements~\cref{eq:non-linear-measurement} are non-trivial to compute exactly, and are non-trivial to approximate, even with the $\omega$-points from~\cref{thm:Omega-point-quadrature-rule}, and is therefore only provided herein for completeness.

The process of approximating the prior using a method such as in~\cref{sec:En-CDE}, and resampling from the posterior as defined in~\cref{sec:polytope-resample} we term the bootstrap convex polytope filter (BCPF).

\begin{corollary}[EnCPF convergence]
    Observe that as the prior estimate approaches a sum of $\delta$ distributions, the posterior in~\cref{eq:EnCPF-posterior} approaches that of the BCPF for the particular measurement uncertainty distribution~\cref{eq:EnCPF-measurement-distribution}.
    Trivially, this implies that the EnCPF converges to the BCPF which converges to the BPF which converges in distribution to true Bayesian inference in the limit of ensemble size $N$.
\end{corollary}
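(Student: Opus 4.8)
The plan is to establish the one substantive claim — that the $N\times M$-component EnCPF posterior collapses onto the $N$-component BCPF posterior as the prior cube mixture degenerates — and then chain the remaining convergences, which are either already proven or classical. First I would invoke the Silverman bandwidth theorem together with~\cref{thm:asymptotic-prior-cube-convergence}: with the prior taken as the cube mixture~\cref{eq:prior-cubes}, the bandwidth obeys $h_\U \to 0$ as $N\to\infty$, so each prior component $P_i^{X^-_k} = Q_{x_k^{-,(i)},\, h_\U^2 \Sigma^-_k}$ has diameter of order $h_\U \lVert (\Sigma^-_k)^{1/2}\rVert \to 0$; hence $\mathcal{U}(x\,;\,P_i^{X^-_k}) \to \delta(x\,;\,x_k^{-,(i)})$ weakly and its mean $\mu^-_{i,k} \to x_k^{-,(i)}$.

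Next I would track what this limit does to the two ingredients of~\cref{eq:EnCPF-posterior}. Since $P_{i,j}^{X^+_k} \subseteq P_i^{X^-_k}$, every posterior component is contained in a set shrinking to the single point $x_k^{-,(i)}$, so $\mathcal{U}(x\,;\,P_{i,j}^{X^+_k}) \to \delta(x\,;\,x_k^{-,(i)})$ whenever $P_{i,j}^{X^+_k}$ is nonempty, independently of $j$. For the weights, the shrinking support also drives the Jacobian linearization used in~\cref{thm:measurement-polyhedron-approximation} to exactness on $P_i^{X^-_k}$, so $\int_{\mathbb{R}^n}\mathcal{U}(x\,;\,P_i^{X^-_k})\,\mathcal{U}(h_k(x)\,;\,P_j^{Y_k})\,\mathrm{d}x \to \mathcal{U}(h_k(x_k^{-,(i)})\,;\,P_j^{Y_k})$ — equivalently the $\omega$-point quadrature of~\cref{thm:Omega-point-quadrature-rule} becomes exact as the cube degenerates. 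Marginalizing the mixture over $j$ then yields effective weights $\propto w^{X^-_k}_i \sum_{j=1}^M w^{Y_k}_j\,\mathcal{U}(h_k(x_k^{-,(i)})\,;\,P_j^{Y_k}) = w^{X^-_k}_i\,p_{Y_k}(h_k(x_k^{-,(i)}))$ with $p_{Y_k}$ as in~\cref{eq:EnCPF-measurement-distribution}, which is exactly the $\delta$-limit of the BCPF update~\cref{eq:BCPF-posterior}. Since the limiting supports $\{x_k^{-,(i)}\}_{i=1}^N$ and the limiting weights of the two mixtures coincide, the EnCPF posterior converges in distribution to the BCPF posterior, proving the first assertion. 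The chain then closes: by~\cref{thm:BCPF-convergence} the BCPF posterior converges to the `true' posterior, and since in the degenerate-prior limit the BCPF mixture reduces to the reweighted empirical measure of the bootstrap particle filter — whose convergence in distribution to Bayesian inference as $N\to\infty$ is classical — the EnCPF inherits the same guarantee.

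The main obstacle I anticipate is bookkeeping at the boundary: when $x_k^{-,(i)}$ lies exactly on a measurement hyperplane (a Lebesgue-null event for a continuously distributed ensemble), the intersection $P_{i,j}^{X^+_k}$ and its limiting weight behave discontinuously, so the statement that supports and weights pass jointly to the limit should carry the appropriate almost-sure qualifier. A secondary point requiring care is justifying the interchange of the $N\to\infty$ limit with the weight integrals, for which the shrinking-diameter bound on $P_i^{X^-_k}$ — together with mild regularity of $h_k$ — does the work.
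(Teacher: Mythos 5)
Your proposal is correct and follows essentially the same route as the paper, which proves this corollary only by the observation embedded in its statement: as the prior cube mixture degenerates to a sum of $\delta$ distributions (via $h_\U \to 0$), the EnCPF posterior components and weights collapse onto those of the BCPF, and the remaining convergences are chained from~\cref{thm:BCPF-convergence} and the classical BPF result. Your elaboration of the weight limit, the shrinking-support argument, and the almost-sure boundary caveat fills in details the paper leaves implicit, but does not constitute a different approach.
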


\subsection{Ensemble Kalmanized convex polytope filter}
\label{sec:EnKCPF-update}

We now provide a hybrid Kalmanized \mH-polytope analogue of the ensemble Gaussian mixture filter~\cite{anderson1999monte, yun2022kernel, liu2016efficient,popov2024adaptive}.
Similar to the EnGMF and other mixture model filters~\cite{popov2024Epanechnikov}, we can make use of a collection of Kalman-type updates.
This is frequently (though incorrectly) labeled as Gaussian sum update~\cite{popov2024adaptive,popov2024Epanechnikov,sorenson1971recursive}.

Starting from the general \mH-polytope mixture model in~\cref{eq:prior-polytope-mixture}, 
and take $\mu_i^{X^-_k}$ to be the mean of the $i$th component of~\cref{eq:prior-polytope-mixture}, and $\Sigma_i^{X^-_k}$ to be its covariance.
Assume that we have uncertainty from a measurement given by the more general mixture model,
\begin{equation}
    p_{Y_k}(y) = \sum_{j=1}^M w^{Y_k}_j\,p_j(y\,;\,y_j^{Y_k},\, R_j^{Y_k}),
\end{equation}
where $w^{Y_k}_j$ are $M$ weights that sum to unity, $y_j^{Y_k}$ is the mean of the $j$th mixture, and $R_j^{Y_k}$ is the covariance of the $j$th mixture.
In general the distributions $p_j$ are frequently assumed to be Gaussian, though this is not necessarily the case.

By the extended Kalmanized convex polytope filter update in~\cref{eq:EKCPF}, the posterior \mH-polytopes are defined by,
\begin{equation}\label{eq:Kalmanized-convex-polytope-sum-update}
\begin{aligned}
    P_{i,j}^{X^+_k} &= (I_n - \widetilde{G}_{i,j,k} H_k(\mu_i^{X^-_k}) ) P_i^{X^-_k}\\
    &\qquad + \widetilde{G}_{i,j,k} H_k(\mu_i^{X^-_k})\mu_i^{X^-_k} + K_k y_j^{Y_k} - G_{i,j,k} h_k(\mu_i^{X^-_k}),\\
    G_{i,j,k} &= \Sigma_i^{X^-_k} H_k^T{\left(H_k \Sigma_i^{X^-_k} H_k^T + R_j^{Y_k}\right)}^{-1},\\
    \widetilde{G}_{i,j,k} &= G_{i,j,k}{\left(I_m + \sqrt{I_m + H_k \Sigma_i^{X^-_k} H_k^T \left(R_j^{Y_k}\right)^{-1}}^{-1} \right)}^{-1}
\end{aligned}
\end{equation}
where the gain matrix $G_{i,j,k}$ corresponds to the Kalman gain~\cref{eq:Kalman-filter} and the gain matrix $\widetilde{G}$ corresponds to the modified Kalman gain~\cref{eq:modified-Kalman-gain}.
The resulting approximation to the posterior is given by, 
\begin{equation}
    p_{X^+_k}(x) \approx \sum_{i = 1}^N\sum_{j=1}^M w^{X^+_k}_{i,j} \mathcal{U}(x\,;\, P_{i,j}^{X^+_k}),
\end{equation}
where the weights are given by,
\begin{equation}\label{eq:weights-EnKCPF}
    w^{X^+_k}_{i,j} \propto w^{X^-_k}_i\, w^{Y_k}_j\, \int_{\mathbb{R}^n} \mathcal{U}(x\,;\,P_i^{X^-_k}) \,p_j(h(x)\,;\,y_j^{Y_k},\, R_j^{Y_k}) \mathrm{d}x,
\end{equation}
which, for non-linear measurements~\cref{eq:non-linear-measurement} are non-trivial to compute exactly, but is conducive to a robust approximation for the prior polytopes defined in~\cref{eq:prior-cubes}.

For the cube $Q_{x_{k}^{-,(i)}, h_\U^2 \Sigma^-_k}$ in~\cref{eq:prior-cubes} the $\omega$-point quadrature from~\cref{thm:Omega-point-quadrature-rule} can be used to approximate the integral in~\cref{eq:weights-EnKCPF}, begetting the approximate weights,
\begin{equation}\label{eq:omega-weight-approx}
    w^{X^+_k}_{i,j} \appropto w^{X^-_k}_i\, w^{Y_k}_j\, \sum_{\ell=-n}^n p_j\left(h\left(\Omega_{\ell, x_{k}^{-,(i)}, h_\U^2 \Sigma^-_k}\right)\,;\,y_j^{Y_k},\, R_j^{Y_k}\right),
\end{equation}
where the uniform term is dropped as it is identical in all the components.

The process of approximating the prior using a method such as in~\cref{sec:En-CDE}, performing the Kalmanized convex polytope sum update, and resampling from the posterior as defined in~\cref{sec:polytope-resample} we term the ensemble Kalmanized convex polytope filter (EnKCPF).

For the EnKCPF, instead of using the Chebyshev center, it is computationally cheaper to start the resampling procedure in~\cref{sec:polytope-resample} from the posterior mean of each component, which can be computed using the Kalman filter equations~\cref{eq:Kalman-filter} using the gains defined by~\cref{eq:Kalmanized-convex-polytope-sum-update}.

\begin{corollary}[EnKCPF convergence]
    Observe that $G_{i,j,k}\to0$ as $N\to\infty$, and similarly for $\widetilde{G}_{i,j,k}$. 
    Trivially, this implies that the EnKCPF converges to the BCPF which converges to the BPF which converges in distribution to true Bayesian inference in the limit of ensemble size $N$.
\end{corollary}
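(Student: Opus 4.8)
The plan is to chase the three limits $\Sigma_i^{X^-_k}\to 0$, $G_{i,j,k}\to 0$, and $\widetilde G_{i,j,k}\to 0$ through the affine update~\cref{eq:Kalmanized-convex-polytope-sum-update} and the weight integral~\cref{eq:weights-EnKCPF}, and then invoke~\cref{thm:asymptotic-prior-cube-convergence} together with the already-established BCPF and BPF convergence to close the chain. First I would note that, by the Silverman bandwidth theorem, $h_\U\to 0$ as $N\to\infty$, so since the prior components in~\cref{eq:prior-cubes} are the cubes $Q_{x_k^{-,(i)},\,h_\U^2\Sigma^-_k}$ with bounded global covariance $\Sigma^-_k$, their covariances $\Sigma_i^{X^-_k}=h_\U^2\Sigma^-_k\to 0$. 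Because each $R_j^{Y_k}$ is a fixed positive-definite covariance, the inverse appearing in $G_{i,j,k}=\Sigma_i^{X^-_k}H_k^T(H_k\Sigma_i^{X^-_k}H_k^T+R_j^{Y_k})^{-1}$ converges to the bounded matrix $(R_j^{Y_k})^{-1}$, hence $G_{i,j,k}\to 0$; likewise the correction factor multiplying $G_{i,j,k}$ in the definition of $\widetilde G_{i,j,k}$ converges to $\tfrac12 I_m$, which is bounded, so $\widetilde G_{i,j,k}\to 0$ as well. Here I would also record that the Jacobians $H_k(\mu_i^{X^-_k})$ stay bounded, since each component mean $\mu_i^{X^-_k}$ converges to the ensemble point $x_k^{-,(i)}$.

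Next I would substitute these limits into~\cref{eq:Kalmanized-convex-polytope-sum-update}: the linear part $I_n-\widetilde G_{i,j,k}H_k(\mu_i^{X^-_k})\to I_n$, while the affine offset $\widetilde G_{i,j,k}H_k(\mu_i^{X^-_k})\mu_i^{X^-_k}+G_{i,j,k}y_j^{Y_k}-G_{i,j,k}h_k(\mu_i^{X^-_k})\to 0$, so $P_{i,j}^{X^+_k}$ is an affine image of the cube $P_i^{X^-_k}$ under a map converging to the identity. Its diameter is therefore $O(h_\U)\to 0$ and its center converges to $x_k^{-,(i)}$, so $\U(\cdot\,;\,P_{i,j}^{X^+_k})$ tends in distribution to $\delta(\cdot\,;\,x_k^{-,(i)})$ exactly as in~\cref{thm:asymptotic-prior-cube-convergence}. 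The same shrinking in the weight integral~\cref{eq:weights-EnKCPF} gives $\int \U(x\,;\,P_i^{X^-_k})\,p_j(h(x)\,;\,y_j^{Y_k},R_j^{Y_k})\,\mathrm dx\to p_j(h(x_k^{-,(i)})\,;\,y_j^{Y_k},R_j^{Y_k})$, so after marginalizing the measurement-mixture index $j$ the posterior weights satisfy $\sum_j w^{X^+_k}_{i,j}\to w^{X^-_k}_i\,p_{Y_k}(h(x_k^{-,(i)})\,;\,y_k)$, the bootstrap importance weights. Hence the EnKCPF posterior collapses onto the BCPF posterior of~\cref{eq:BCPF-posterior} (equivalently, onto the reweighted prior $\delta$-sum of the BPF), and invoking~\cref{thm:BCPF-convergence} and the cited BPF convergence result yields convergence in distribution to true Bayesian inference.

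The main obstacle is not any single hard estimate but keeping the two shrinking scales aligned: one must verify that the affine maps defining $P_{i,j}^{X^+_k}$ converge to the identity \emph{uniformly} over the components and at least comparably fast to the rate at which the cubes themselves collapse, so that the posterior polytopes genuinely shrink to the ensemble points rather than being smeared by the update. This rests entirely on the boundedness of $(R_j^{Y_k})^{-1}$ and of the Jacobians $H_k(\mu_i^{X^-_k})$, which I would state as standing regularity assumptions. A minor additional point worth a remark is that the $\omega$-point approximation~\cref{eq:omega-weight-approx} does not disturb the argument, since the $\omega$-points of $Q_{x_k^{-,(i)},\,h_\U^2\Sigma^-_k}$ all lie within $O(h_\U)$ of $x_k^{-,(i)}$, so the approximate weights converge to the same bootstrap importance weights up to the fixed normalizing count $2n+1$.
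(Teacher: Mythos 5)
Your proposal is correct and follows exactly the chain of reasoning the paper itself sketches: the Silverman bandwidth forces $\Sigma_i^{X^-_k}=h_\U^2\Sigma^-_k\to 0$, hence $G_{i,j,k},\widetilde G_{i,j,k}\to 0$, the affine updates degenerate to the identity, the marginalized weights reduce to the bootstrap importance weights, and the BCPF/BPF convergence results close the argument. The paper states this only as a one-line observation, so your write-up is simply a more careful elaboration of the same route, including the correct identification of the $\tfrac12 I_m$ limit of the correction factor in $\widetilde G_{i,j,k}$.
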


\subsection{Static `banana' ensemble filtering example}
\begin{figure}
    \centering
    \includegraphics[width=0.49\linewidth]{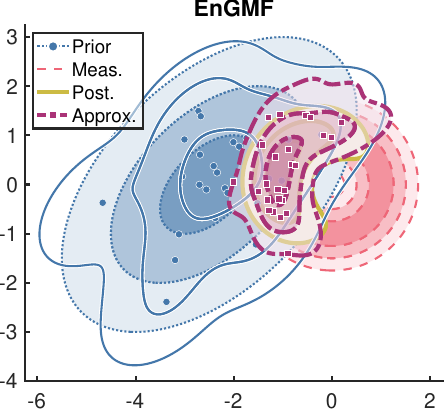}%
    \includegraphics[width=0.49\linewidth]{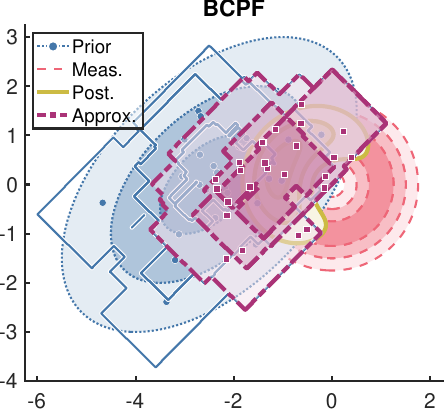}
    \includegraphics[width=0.49\linewidth]{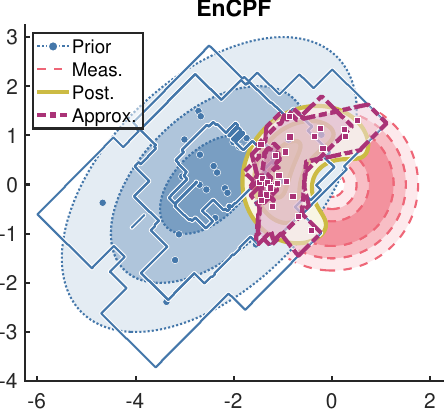}%
    \includegraphics[width=0.49\linewidth]{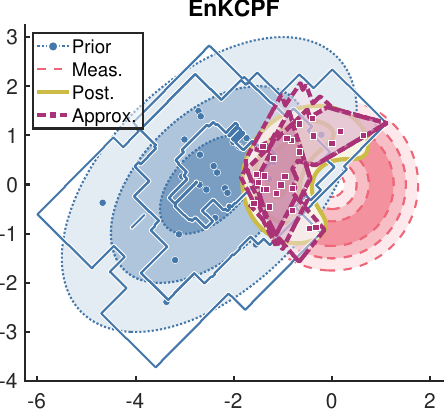}%
    \caption{Evaluation of the EnGMF (top left panel), the BCPF (top right panel), the EnCPF (bottom left panel), and the EnKCPF (bottom right panel) on the static test problems. 
    On all panels, the left blue dotted ovals represent one, two and three standard deviations of prior normal distribution, the solid blue dots represent the prior samples, and the solid blue outlines represent the kernel density estimates using the samples.
    The red dashed outlines represent the measurement distribution with each circle slice representing one, two, and three standard deviations.
    The solid yellow outline resembling a banana represents the exact posterior distribution again with one, two, and three standard deviations.
    The dash-dotted purple outlines represent the respective posterior approximations by their respective algorithms.}
    \label{fig:static-ensemble-example}
\end{figure} 

Similar to~\cref{eq:CPF-example-prior}, take the prior to be Gaussian,
\begin{equation}
    p_{X^-}(x) = \mathcal{N}(x; \mu^-, \Sigma^-),\quad \mu^- = \begin{bmatrix}
        -2.5\\ 0
    \end{bmatrix},\quad \Sigma^- = \begin{bmatrix}
        1 &0.5\\ 0.5 & 1
    \end{bmatrix},
\end{equation}
and take the same measurement functions and distributions from~\cref{eq:EKCPF-example-measurement}. As the posterior defined by this example resembles a banana, it is labeled as such.

We test this example on four different methods: the EnGMF~\cite{anderson1999monte,liu2016efficient,yun2022kernel,popov2024adaptive}, the BCPF defined in~\cref{sec:BCPF-update}, the EnCPF defined in~\cref{sec:EnCPF-update}, and the EnKCPF  defined in ~\cref{sec:EnKCPF-update}.
For the EnCPF the Gaussian measurement error distribution is approximated by the uniform distribution $\U(Q_{1, 1/16})$, with the weights approximated by a grid approach with $10^6$ samples.
The ensemble filters are all given the same identical collection of $N=25$ samples from the prior.
The results of this experiment can be seen in~\cref{fig:static-ensemble-example}.
As can be seen, the EnGMF is overly conservative in determining the posterior, with the BCPF also being very conservative. The EnCPF has a sharp cutoff region around its approximation of the measurement uncertainty distribution, and the EnKCPF has a unique combination of \mH-polytopes that is less conservative, but still looks similar to, the distribution of the EnGMF posterior.

\section{Sequential Filtering Numerical Examples}
\label{sec:numerical-experiments}

The goal of this section is to show that the new \mH-polytope based filter, the EnKCPF can perform adequately in both high and low dimensional settings.
We test several filters, namely the square-root ensemble Kalman filter (EnKF)~\cite{whitaker2002ensemble}, the BPF~\cite{stewart1992use}, the EnGMF as define in~\cite{popov2024adaptive}, the BCPF as defined in~\cref{sec:BCPF-update}, and the EnKCPF~\cref{sec:EnKCPF-update} over two different scenarios.

For all filters except the EnKF we augment the weights with a small defensive factor~\cite{liu2016efficient},
\begin{equation}\label{eq:defensive-factor}
    w_i\,\xleftarrow[]{}\, (1- d_f)\,w_i + d_f/N,
\end{equation}
where we fix $d_f = 10^{-4}$ in order to prevent weight degeneracy.
For all polytope-based filters, the hit-and-run~\cref{sec:polytope-resample} methodology is performed for resampling using $S=25$ steps.

The metric for error that is used in this work is the spatio-temporal root mean squared error (RMSE),
\begin{equation}\label{eq:RMSE}
    \operatorname{RMSE}(x, \mathbb{E}[X^{+}]) = \mathbb{E}_{\operatorname{MC}}\sqrt{\frac{1}{Kn} \sum_{k=1}^K \left\lVert x_{k} - \mathbb{E}[X^+_k]\right\rVert_2^2},
\end{equation}
where $K$ is the number of steps used for computing the error, $n$ is the spatial dimension, $x_k$ is the true state at time index $k$, $\mathbb{E}[X^+_k]$ is our ensemble mean posterior estimate at time index $k$, and the expectation is taken over multiple independent Monte-Carlo realizations.

\subsection{Ikeda Map}
\begin{figure}[t]
    \centering
    \begin{tikzpicture}
    \begin{axis}[clean,
        cycle list name=tol,
        xmode=log,
        log ticks with fixed point,
        xtick={25, 50, 100, 250, 500, 1000, 2500},
        table/col sep=comma,
        xmin = 20,
        xmax = 2600,
        ymin = 0.55,
        ymax = 0.665,
        clip = true,
        xlabel = {Ensemble Size ($N$)},
        ylabel = {Mean Spatio-temporal RMSE},
        every axis plot/.append style={line width=2pt, mark size=3.5pt},
        legend style={at={(1.32,0.7)},anchor=center},
        legend cell align={left}]
    
    \addplot[mark=o,color=tolblue] table [x=N, y=rmseEnGMF, col sep=comma] {data/ikeda.csv};
    \addlegendentry{EnGMF};

    \addplot[mark=square,color=tolred] table [x=N, y=rmseEnKF, col sep=comma] {data/ikeda.csv};
    \addlegendentry{EnKF};

    \addplot[mark=+,color=tolcyan] table [x=N, y=rmseBCPF, col sep=comma] {data/ikeda.csv};
    \addlegendentry{BCPF};

    \addplot[mark=x,color=tolgreen] table [x=N, y=rmseBPF, col sep=comma] {data/ikeda.csv};
    \addlegendentry{BPF};

    \addplot[mark=star,color=tolyellow] table [x=N, y=rmseEnKCPF, col sep=comma] {data/ikeda.csv};
    \addlegendentry{EnKCPF};

    \addplot[mark=none,color=tolpurple, dashed] table [x=N, y=rmseNoFilter, col sep=comma] {data/ikeda.csv};
    \addlegendentry{No Filter};

    \addplot[mark=none,color=tolpurple, dotted] table [x=N, y=rmseBayes, col sep=comma] {data/ikeda.csv};
    \addlegendentry{Bayesian};
    
    \end{axis}
    \end{tikzpicture}
    \caption{Ensemble size ($N$) versus mean spatio-temporal RMSE for the Ikeda map with the EnGMF (blue line with circular markers), EnKF (read line with square markers), BCPF (cyan line with plus markers), BPF (green line with x markers), and the EnKCPF (yellow line with star markers). Additionally, a `no filter' scenario is plotted as the top dashed line, and a true Bayesian inference scenario is plotted as the bottom dotted line.}
    \label{fig:ikeda-experiment}
\end{figure}
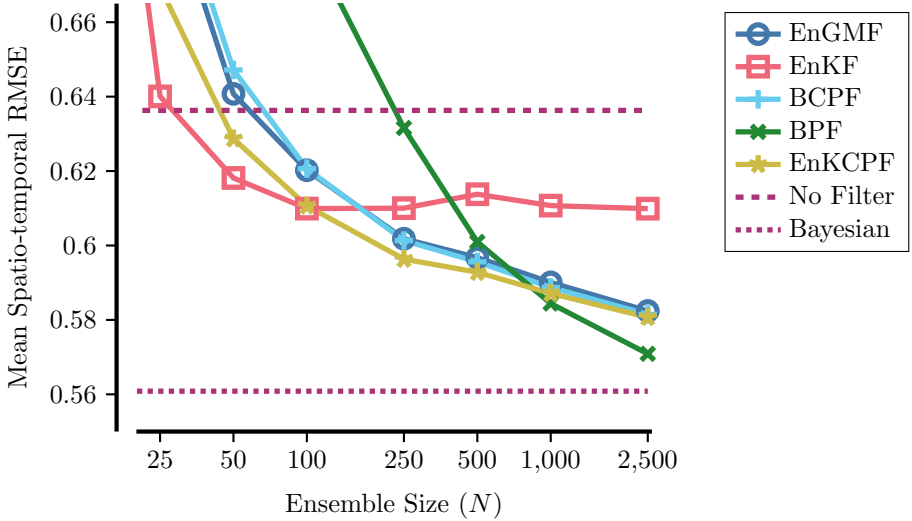

For the first sequential filtering example we make use of the Ikeda map~\cite{ikeda1979multiple, ikeda1980optical, fm2024mapping, osipenko2006dynamical} for its ease of implementation and speed.
The Ikeda map is a two dimensional discrete dynamical system that is the composition of three distinct actions: rotation by an angle derived from the inputs, a scaling by a constant, and a shift of one of the variables.
The particular version of the Ikeda map that is used for this work is given by,
\begin{equation}\label{eq:Ikeda-map}
\begin{gathered}
    \begin{bmatrix}
        v_{k+1}\\
        w_{k+1}
    \end{bmatrix}
    = \begin{bmatrix}
    1 + 0.9 \left(v_k \cos \theta_k - w_k\sin\theta_k\right)
    \\
    0.9 \left(v_k \sin\theta_k + w_k\cos\theta_k\right)
    \end{bmatrix},\\
    \text{with}\qquad \theta_k = 0.4 - \frac{6}{1 + v_k^2 + w_k^2},
    \end{gathered}
\end{equation}
where $v$ is the first variable, $w$ is the second variable, and~\cref{eq:Ikeda-map} propagates the system from time index $k$ to time index $k+1$.
We assume that we know the dynamics exactly and that their are no sources of model error.

For the measurement we again assume a range measurement,
\begin{equation}
    h(x) = \lVert x \lVert_2, \,\, H(x) = \frac{x^T}{\lVert x \lVert_2},\quad p_{Y_k}(y) = \mathcal{N}(y; h(x_k), 1),
\end{equation}
with Gaussian measurement error centered on the true range with variance one.

For the BPF we apply Gaussian process noise before the update step with covariance of $10^{-8} I_2$, as without it, the BPF does not converge.
The EnKF is tuned with an inflation factor~\cite{asch2016data} of $1.001$ in order to prevent particle collapse.
It is worth noting that the EnKF that we make use of computes the covariance matrices in a linearized and not statistical manner which has been shown to have superior performance for low-dimensional problems (and incidentally make the EnKF a non-linear filter) in~\cite{michaelson2023ensemble}.
We additionally run a particle filter~\cite{reich2015probabilistic} with an excess of  particles to create a baseline theoretically minimum error which we term `Bayesian inference', and run a `no filter' scenario with the same number of particles to get a theoretical maximum error if no filtering is performed.

We run all filters for 550 steps discarding the first 50 over 1008 Monte Carlo samples and compute the mean spatio-temporal RMSE~\cref{eq:RMSE} varying the ensemble size from $N=25$ to $N=2500$. 
The results of this experiment can be seen in~\cref{fig:ikeda-experiment}.
It is interesting to see that the BCPF performs in a similar manner to that of the EnGMF even though both filters are built on widely different architectures, only sharing a common idea of using kernel density estimation, and a common hidden Gaussian assumption on the optimality of the bandwidth parameter.
The EnKF starts off as the best filter for small ensemble sizes, likely due to it requiring less information for good filter performance.
The EnKCPF then briefly overtakes the EnKF for $N=250$ and $N=500$ with the BPF overtaking the KDE-based filters for all larger ensemble sizes, as the KDE-based methods likely create conservative estimates of the underlying probability distribution.
The EnGMF, BCPF, and EnKCPF can likely have superior performance with optimal filter tuning, though this is not the focus of this work.
The EnKCPF is always the first or second best filter out of all the ones presented for this scenario likely as the \mH-polytope mixture assumption can likely better approximate the compact attractor of the Ikeda map than the EnGMF, and the Kalmanization of the filter makes better use of the available limited information than the BCPF.

\subsection{Lorenz '96}
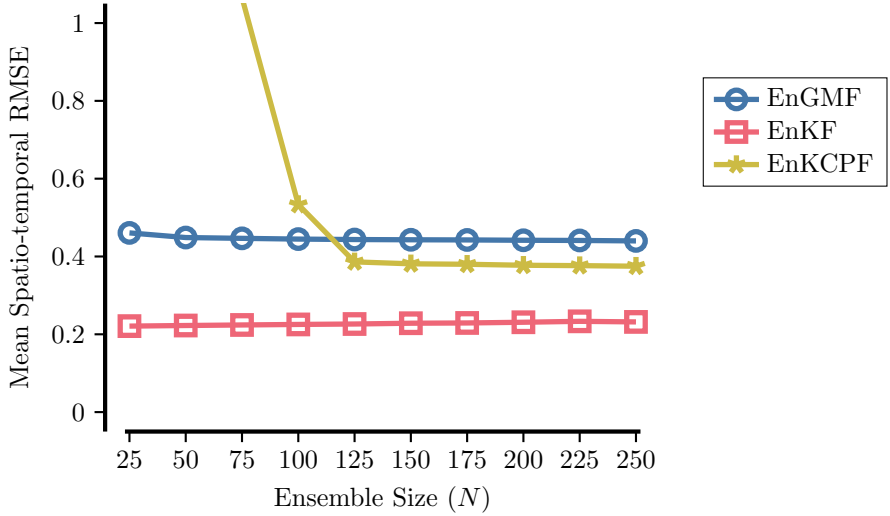
\begin{figure}[t]
    \centering
    \begin{tikzpicture}
    \begin{axis}[clean,
        cycle list name=tol,
        xtick={25, 50, 75, 100, 125, 150, 175, 200, 225, 250 },
        table/col sep=comma,
        xmin = 23,
        xmax = 252,
        ymin = -0.05,
        ymax = 1.05,
        clip = true,
        xlabel = {Ensemble Size ($N$)},
        ylabel = {Mean Spatio-temporal RMSE},
        every axis plot/.append style={line width=2pt, mark size=3.5pt},
        legend style={at={(1.30,0.7)},anchor=center},
        legend cell align={left}]
    
    \addplot[mark=o,color=tolblue] table [x=N, y=rmseEnGMF, col sep=comma] {data/l96.csv};
    \addlegendentry{EnGMF};

     \addplot[mark=square,color=tolred] table [x=N, y=rmseEnKF, col sep=comma] {data/l96.csv};
    \addlegendentry{EnKF};

    \addplot[mark=star,color=tolyellow] table [x=N, y=rmseEnKCPF, col sep=comma] {data/l96.csv};
    \addlegendentry{EnKCPF};

    
    \end{axis}
    \end{tikzpicture}
    \caption{Ensemble size ($N$) versus mean spatio-temporal RMSE for the Lorenz '96 equations with the EnGMF (blue line with circular markers), EnKF (red line with square markers), and the EnKCPF (yellow line with star markers). }
    \label{fig:Lorenz-96-experiment}
\end{figure}

The goal of the second numerical experiment is to show that the EnKCPF has a better error constant than the EnGMF in the high-dimensional setting. 
We make use of the standard $40$-variable Lorenz '96 equations~\cite{lorenz1996predictability,van2018dynamics},
\begin{equation}
    x_k' = -x_{k-1}(x_{k-2} - x_{k+1}) - x_k + F,\quad k = 1,\dots, 40,
\end{equation}
where by the cyclic boundary conditions, $x_0 = x_{40}$, $x_{-1} = x_{39}$, and $x_{41} = x_{1}$. The forcing is set to $F=8$ to have a chaotic system with a Kaplan-Yorke dimension of $27.1$ and $13$ positive Lyapunov exponents~\cite{popov2019bayesian}.
At each time index, the system is propagated with one step of the canonical fourth order explicit Runge-Kutta method with a time step of $\Delta t = 0.05$.

For the measurement, we take the simple linear measurements,
\begin{equation}
    h(x) = x,\quad H(x) = I_{40},\quad R = I_{40},
\end{equation}
observing all variables independently with a measurement error of identity.

In order to estimate the covariance using a small ensemble we make use of B-localization~\cite{asch2016data}, thus for all the filters tested a Gaussian decorrelation function with radius $r=3$ is applied to the statistical covariance of the ensemble in order for the covariance estimate to not have an influence on the performance of the filters.
The BPF and BCPF are not run for this scenario as they both fail to converge. 
Additionally a no filter scenario is not plotted as the error is significantly higher than the error of the filters.
No good estimate of the lowest possible error exists for this scenario as no convergent filter is known to have a high enough rate of convergence.

We run all three filters for 1100 steps discarding the first 100 over 108 Monte Carlo samples and compute the mean spatio-temporal RMSE~\cref{eq:RMSE} varying the ensemble size from $N=25$ to $N=250$. 
The results of this experiment can be seen in~\cref{fig:Lorenz-96-experiment}.
While the EnKF is the best filter in this scenario, it could not perform as well as the convergent filters for high ensemble sizes for the Ikeda map.
This means that out of all tested filters, the EnGMF and the EnKCPF are the only convergent filters that work for a difficult low-dimensional setting, and an easy (for the EnKF) high-dimensional setting.

While the EnKF and the EnGMF both do not diverge in the low ensemble size regime, the EnKCPF does not.
The EnKCPF however does attain lower error than the EnGMF for a larger ensemble size.
The poor performance of the EnKCPF for smaller ensemble members is likely due to the approximation of the prior in~\cref{eq:prior-cubes}.
The unit $n$-cube~\cref{eq:unit-cube} can embed a sphere of radius $1/2$, but can be circumscribed by a sphere of radius $\sqrt{n}/2$, meaning that it likely is a poor approximation for higher dimensions.

\section{Conclusions and future outlook}
\label{sec:conclusions}

In this work we have presented a geometric look at Bayesian inference based on (convex) \mH-polytopes.
We have developed the convex polytope filter (CPF), the extended convex polytope filter (ECPF), the Kalmanized convex polytope filter (KCPF), its extended counterpart (EKCPF), and three convergent ensemble based filters: the bootstrap convex polytope filter (BCPF), the ensemble convex polytope filter (EnCPF), and finally the ensemble Kalmanized convex polytope filter (EnKCPF).
We have shown through numerical examples that this methodology holds promise for both low dimensional and high dimensional applications.

The largest limiting factor of the methodology developed herein is the simplicity of the prior kernel density estimate~\cref{eq:prior-cubes}.
This estimate does allow for known means and covariances to be used for the EnKCPF, making it simple to implement and computationally tractable, but it suffers in the high-dimensional setting from a large discrepancy between the distance from the center to the facets and from the center to the vertices.

There are a lot of potential future directions that could be of significant utility to this existing methodology, such as, 
better approximations to the EnCPF measurements and weights could elevate it to a useful algorithm,
a better approximation to the prior polytope mixture from the ensemble, with good approximations to the mean and covariance,
potentially weakening the convergent filter requirement and adding methodologies such as R-localization~\cite{asch2016data},
and making use of preexisting linear constraints such as in~\cite{gupta2007kalman, prakash2010constrained}.

\section*{Acknowledgments}
The first author would like to acknowledge Evie Nielen for conversations on polytopes that tangentially inspired this work.

\bibliographystyle{siamplain}
\bibliography{biblio}

\end{document}